%%September 1, 2014
\NeedsTeXFormat{LaTeX2e}
\documentclass[12pt]{amsart} 
\usepackage{a4wide}
\usepackage{amssymb}
\usepackage{amsthm}
\usepackage{amsmath}
\usepackage{amscd}   
\usepackage{verbatim}
\usepackage{color}
%\usepackage[mathscr]{eucal}
%\usepackage[all]{xy}
%\usepackage[OT2,T1]{fontenc}
%\DeclareSymbolFont{cyrletters}{OT2}{wncyr}{m}{n}
%\DeclareMathSymbol{\Sha}{\mathalpha}{cyrletters}{"58}
%\addtolength{\topmargin}{-3mm}
%\addtolength{\textheight}{7mm}
\usepackage{hyperref}
\textheight8.65in \textwidth6.5in \numberwithin{equation}{section}

\theoremstyle{plain}
\newtheorem{theorem}{Theorem}[section]
\newtheorem{corollary}[theorem]{Corollary}
\newtheorem{lemma}[theorem]{Lemma}
\newtheorem{proposition}[theorem]{Proposition}

\theoremstyle{definition}

\theoremstyle{remark}
\newtheorem*{remark}{Remark}

\newcommand{\Dhat}{\widehat{D}}
\newcommand{\R}{\mathbb{R}}
\newcommand{\Q}{\mathbb{Q}}
\newcommand{\Z}{\mathbb{Z}}
\newcommand{\N}{\mathbb{N}}
\newcommand{\C}{\mathbb{C}}

\renewcommand{\L}{\mathbb{L}}

\renewcommand{\P}{\mathbb{P}}

\newcommand{\calE}{\mathcal{E}}

\newcommand{\calF}{\mathcal{F}}

\newcommand{\calL}{\mathcal{L}}
\newcommand{\calM}{\mathcal{M}}

\newcommand{\calQ}{\mathcal{Q}}

\newcommand{\HH}{\mathbb{H}}

%\newcommand{\Sp}{\operatorname{Sp}}

 % Clifford group

\newcommand{\Orth}{\operatorname{O}}

\newcommand{\SL}{{\text {\rm SL}}}

 %discriminant group

%

\begin{document}

\title[$p$-adic properties of modular shifted convolution $L$-series]{$p$-adic properties of modular shifted convolution Dirichlet series}

\author{Kathrin Bringmann, Michael H. Mertens, and Ken Ono}

\address{Mathematisches Institut der Universit\"at zu K\"oln, Weyertal 86-90,
D-50931 K\"oln, Germany} \email{kbringma@math.uni-koeln.de}

\address{Mathematisches Institut der Universit\"at zu K\"oln, Weyertal 86-90,
D-50931 K\"oln, Germany} \email{mmertens@math.uni-koeln.de}

\address{Department of Mathematics and Computer Science, Emory University,
Atlanta, Georgia 30022} \email{ono@mathcs.emory.edu}
\thanks{
The research of the first author was supported by the Alfried Krupp Prize for Young University Teachers of the Krupp foundation and the research leading to these results has received funding from the European Research Council under the European Union's Seventh Framework Programme (FP/2007-2013) / ERC Grant agreement n. 335220 - AQSER. The second author thanks the DFG-Graduiertenkolleg 1269 `Global Structures in Geometry and Analysis' for the financial support of his research. The third author thanks the National Science Foundation and the Asa Griggs Candler Fund for their generous support.}

\subjclass[2010]{11F37, 11G40, 11G05, 11F67}

\begin{abstract}
Hoffstein and Hulse recently introduced the notion of {\it shifted convolution Dirichlet series}
for pairs of modular forms $f_1$ and $f_2$. The second two authors investigated certain special values of {\it symmetrized sums} of such functions, numbers which are generally expected to be mysterious
transcendental numbers.
They proved that the generating functions
of these values in the $h$-aspect are linear combinations of  mixed mock modular forms and quasimodular forms. Here we examine the special cases when $f_1=f_2$ where, in addition, there is a prime $p$ for which $p^2$ divides the level. We prove that the mixed mock modular form is a linear combination of at most two  weight 2 weakly holomorphic $p$-adic modular forms.
\end{abstract}

\maketitle

\section{Introduction and statement of results}

Suppose that $f_1, f_2 \in S_{k}(\Gamma_0(N))$ 
 are even integer weight $k$ cusp forms with $L$-functions
$$
L(f_j,s)=\sum_{n=1}^{\infty}\frac{a_j(n)}{n^s}.
$$
Rankin and Selberg \cite{Rankin, Selberg} independently introduced the so-called {\it Rankin-Selberg convolution}
$$
L(f_1\otimes f_2,s):=\sum_{n=1}^{\infty}\frac{a_{1}(n)\overline{a_{2}(n)}}{n^s},
$$
one of the most fundamental objects in the theory of automorphic forms.
Later in 1965, Selberg \cite{Selberg2} introduced {\it shifted convolution} $L$-functions,
series which  play an important role in progress towards Ramanujan-type conjectures for Fourier coefficients and the
Lindel\"of Hypothesis for
automorphic $L$-functions inside the critical strip.

In a recent paper, Hoffstein and Hulse
 \cite{HoffsteinHulse}\footnote{Here we choose slightly different normalizations
for Dirichlet series from those that appear in \cite{HoffsteinHulse}.} introduced the
 {\it shifted convolution
series}
\begin{equation}\label{shiftedseries}
D(f_1,f_2,h;s):=\sum_{n=1}^{\infty}\frac{a_{1}(n+h)\overline{a_{2}(n)}}{n^s}.
\end{equation}
They obtained the meromorphic continuation of these series and certain multiple Dirichlet series
which are obtained by additionally summing in $h$ aspect. 

The second two authors recently investigated various symmetrized forms of these Dirichlet series \cite{MO14}, and
they proved that the generating functions of certain special values in the $h$-aspect are sums of mixed mock modular forms and
quasimodular forms.

Here we study the special case where $f_1=f_2$ is an even integer weight newform.
For each positive integer $h$  we define the {\it symmetrized shifted convolution}
Dirichlet series
\begin{equation}\label{symmetrizedseries}
\Dhat(f,f,h;s):=D(f,f,h;s)-D\left(\overline{f},\overline{f},-h;s\right).
\end{equation}
If $f$ has weight $k$, then we define the generating function
\begin{equation}\label{LDefinition}
\L(f,f;\tau):=\sum_{h=1}^{\infty}\Dhat(f,f,h;k-1)q^h,
\end{equation}
where $q:=e^{2\pi i \tau}$ and $\tau\in\HH$, the upper-half of the complex plane.
In \cite{MO14}, the second two authors proved that $\L(f,f;\tau)$ is the sum of a weight 2 mixed mock modular form and a weight 2 quasimodular form.

For example, if
$f=\Delta$, the normalized weight 12 cusp form on $\SL_2(\Z)$, then we have that
\begin{equation}\label{DeltaExample}
\L(\Delta,\Delta;\tau)=-33.383\dots q + 266.439\dots q^2 - 1519.218\dots q^3+ 4827.434\dots q^4-\dots\: .
\end{equation}
Using the usual Eisenstein series $E_{2k}=E_{2k}(\tau)$ and Klein's $j$-function, we let
$$
\sum_{n=-1}^{\infty} r(n)q^n:=-\Delta\left(j^2-1464j-\alpha^2+1464\alpha\right),
$$
where $\alpha=106.10455\dots$. If $\beta=2.8402\dots$, then Theorem 1.1 of
\cite{MO14} is illustrated by the identity
\begin{displaymath}
\L(\Delta,\Delta;\tau)=-\frac{\Delta}{\beta}\left (\frac{65520}{691}-\sum_{n\neq 0} r(n)n^{-11}q^n\right)
-\frac{E_2}{\beta}.
\end{displaymath}
The first summand above is a weight 2  {\it mixed mock modular form}; it is essentially the product of the weight 12 modular form $\Delta$ with the weight $-10$ mock modular form
(see Section~\ref{HMF})
\begin{equation*}
%\label{mmf}
\frac{65520}{691}-\sum_{n\neq 0} r(n)n^{-11}q^n.
\end{equation*}
The second summand $-E_2/\beta$ is a weight 2 quasimodular form.

The numerics in  (\ref{DeltaExample}) provide evidence for the general belief that
such special values are mysterious transcendental numbers.  In this case, their algebraic properties are  dictated by the
constants $\alpha$ and $\beta$. In particular, these constants are the only sources of irrationality for the coefficients in the generating function. Therefore, to better understand these special values, it is natural to investigate situations where these $q$-series are nearly algebraic, and to then
investigate their $p$-adic properties.
In this note we identify one particular situation where these difficulties can be addressed.

To make this precise, we make use of Eichler integrals, $p$-adic modular forms, and weight 2 weakly holomorphic quasimodular forms. If $F(\tau)=\sum_{n\in \Z} A(n)q^n$ is a weight $k$ {\it weakly holomorphic modular form},
one whose poles (if any) are supported at cusps, then its {\it Eichler integral} is 
\begin{equation*}
%\label{eichler}
\mathcal{E}_F(\tau):=\sum_{n\neq 0} A(n)n^{1-k}q^n.
\end{equation*}
Following the seminal work\footnote{The notion used here is slightly different from Serre's original definition. We do not require that $p$-adic modular forms are limits of holomorphic modular forms.} of Serre  \cite{Serre},
we say that a $q$-series $G(q)$ is a {\it weakly holomorphic $p$-adic modular form} of weight $k$ if there exists a sequence of weakly holomorphic modular forms whose coefficients $p$-adically tend to those of $G(q)$% as $p$-adic limits
, with the additional property that their weights $p$-adically tend to $k$.
Finally, we require weight 2 weakly holomorphic quasimodular forms. If $M^{!}_2(\Gamma_0(N))$ denotes the space of weight 2 weakly holomorphic modular forms on $\Gamma_0(N)$, then
the space of {\it weight 2 weakly holomorphic quasimodular forms} is defined as
\begin{equation*}
\widetilde{M}_2^{!}(\Gamma_0(N)):= \C E_2 \oplus M^{!}_2(\Gamma_0(N)).
\end{equation*}

For even weight newforms $f$ whose level is divisible by the square of a prime $p$, we obtain the following theorem about
the $p$-adic properties of $\L(f,f;\tau)$.

\begin{theorem}\label{main}
Let $f\in S_k(\Gamma_0(N))$ be an even weight  newform.
 If $p$ is a prime with $p^2\mid N$, then there exist constants $\delta_1,
 \delta_2\in\C$, a weight 2 weakly holomorphic quasimodular form $\calQ_f\in\widetilde{M}^!_2(\Gamma_0(N))$, and a weight $2-k$ weakly holomorphic $p$-adic modular form $\calL_f$ for which
\begin{equation*}
\L(f,f;\tau)=\delta_1f(\tau)\calL_f(\tau)+\delta_2 f(\tau)\mathcal{E}_f(\tau) +\calQ_f (\tau).
\end{equation*}
Moreover, if $f$ has complex multiplication, then
 there are choices with $\delta_2=0$.
\end{theorem}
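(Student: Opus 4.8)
The plan is to begin from the structure theorem of \cite{MO14}, which expresses $\L(f,f;\tau)$ as the sum of a weight $2$ quasimodular form and a weight $2$ mixed mock modular form of the shape $\delta_1 f(\tau)\,\widetilde{\mathcal{M}}(\tau)$, in which $\widetilde{\mathcal{M}}$ is, up to an additive constant, the Eichler integral $\mathcal{E}_{\mathcal{F}}$ of an explicit weight $k$ weakly holomorphic modular form $\mathcal{F}\in M_k^!(\Gamma_0(N))$ whose principal parts are prescribed by $f$ (as illustrated by $R$ in the $\Delta$--example). It therefore suffices to show that $f\cdot\mathcal{E}_{\mathcal{F}}$ is a $\C$-linear combination of at most two weight $2$ weakly holomorphic $p$-adic modular forms modulo a weight $2$ weakly holomorphic quasimodular form; the additive constant is harmless, as it becomes the constant term of the $p$-adic form produced below.

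The single arithmetic input that makes the hypothesis $p^2\mid N$ relevant is that, by the theory of newforms (Atkin--Lehner--Li), a newform on $\Gamma_0(N)$ with $p^2\mid N$ has $a(p)=0$. Hence $f$ is a $U_p$-eigenform with eigenvalue $0$, so $a(m)=0$ whenever $p\mid m$; that is, $f$ is already $p$-depleted. I would exploit this by splitting $\mathcal{F}=\mathcal{F}^{(p)}+V_pU_p\mathcal{F}$ into its $p$-depleted part $\mathcal{F}^{(p)}=\sum_{p\nmid n}A(n)q^n$ (still a classical weakly holomorphic form on $\Gamma_0(N)$, since $U_p$ and $V_p$ preserve this level) and its $p$-divisible part $V_pU_p\mathcal{F}=\sum_{p\mid n}A(n)q^n$, and writing $\mathcal{E}_{\mathcal{F}}=\mathcal{E}_{\mathcal{F}^{(p)}}+\mathcal{E}_{V_pU_p\mathcal{F}}$ accordingly.

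The heart of the matter is to prove that the $p$-depleted Eichler integral $\mathcal{E}_{\mathcal{F}^{(p)}}=\sum_{p\nmid n}A(n)n^{1-k}q^n$ is a weight $2-k$ weakly holomorphic $p$-adic modular form. For this I would invoke Serre's theta operator $\Theta=q\,\frac{d}{dq}$, which sends $p$-adic modular forms of weight $\kappa$ to $p$-adic modular forms of weight $\kappa+2$ (see \cite{Serre}), together with the interpolation $n^{\phi(p^m)}\to 1$ in $\Z_p$ valid for $p\nmid n$. Setting $j_m:=(1-k)+\phi(p^m)$, the forms $\Theta^{j_m}(\mathcal{F}^{(p)})=\sum_{p\nmid n}A(n)n^{j_m}q^n$ are $p$-adic modular of weight $(2-k)+2\phi(p^m)$, their coefficients converge $p$-adically to those of $\mathcal{E}_{\mathcal{F}^{(p)}}$, and their weights converge to $2-k$ in Serre's weight space; this produces $\calL_f$, and then $f\calL_f$ is a weight $2$ weakly holomorphic $p$-adic modular form. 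The same interpolation applied to the $p$-depleted form $f$ itself shows that $\mathcal{E}_f$ is likewise a weight $2-k$ $p$-adic modular form, so $f\mathcal{E}_f$ is a second weight $2$ $p$-adic modular form. I would then treat the $p$-divisible contribution through the identity $\mathcal{E}_{V_pU_p\mathcal{F}}=p^{1-k}V_p\,\mathcal{E}_{U_p\mathcal{F}}$, using the $f$-isotypic structure of $\mathcal{F}$ to write $U_p\mathcal{F}=\gamma f+g$ with $g$ a $p$-depleted form: the summand $g$ again yields a $p$-adic modular form absorbed into $\calL_f$, the $\gamma f$ term contributes the distinguished multiple of $f\mathcal{E}_f$ recorded by $\delta_2$ (up to $p$-adic corrections that pass into $\calL_f$), and every product of $f$ with a classical weight $2-k$ weakly holomorphic form lands in $M_2^!(\Gamma_0(N))$ and is absorbed into $\mathcal{Q}_f$.

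For the complex multiplication case, the extra vanishing $a(\ell)=0$ for all primes $\ell$ inert in the CM field is enough to select the dual form $\mathcal{F}$ as a $p$-depleted form, equivalently a $U_p$-eigenform of eigenvalue $a(p)=0$, so that $V_pU_p\mathcal{F}=0$ and the residual Eichler integral disappears, forcing $\delta_2=0$. The main obstacle I anticipate is controlling this $p$-divisible part: one must pin down $U_p\mathcal{F}$ precisely enough to see that, modulo $\calL_f$ and $\mathcal{Q}_f$, its Eichler integral contributes exactly a multiple of $\mathcal{E}_f$ rather than some unrelated Eichler integral, and in the CM case one must produce the $p$-depleted (eigenform) representative carrying the prescribed principal part. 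By contrast, the theta-operator interpolation, once $f$ is known to be $p$-depleted, is robust and essentially formal.
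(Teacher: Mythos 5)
Your proposal correctly isolates the two ingredients that also drive the paper's proof: the newform theory fact that $p^2\mid N$ forces $a_f(p)=0$ (so $f$ is $p$-depleted), and Serre's theorem that $D$ sends $p$-adic modular forms of weight $\kappa$ to weight $\kappa+2$, so that $\sum_{p\nmid n}A(n)n^{j_m}q^n$ with $j_m=1-k+\phi(p^{m})$ interpolates the Eichler integral $\sum_{p\nmid n}A(n)n^{1-k}q^n$ as a $p$-adic form whose weight tends to $2-k$. However, there is a genuine gap before any of this can be run: you never normalize the coefficients of the mock modular form. The coefficients of $\calM_f^+$, hence of your $\mathcal{F}=D^{k-1}(\calM_f^+)$ and of its $p$-depleted part, are in general transcendental (in the paper's $\Delta$-example the form $R$ visibly involves $\alpha=106.104\ldots$), so one cannot even speak of their $p$-adic limits. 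The paper resolves this with Theorem 1.1 of \cite{GKO}: $\calF_\alpha:=\calM_f^+-\alpha\calE_f$ has coefficients in the Hecke field once $\alpha-c^+(1)\in K$, and this subtraction is precisely where the term $\delta_2 f\calE_f$ comes from; in the CM case Theorem 1.3 of \cite{BOR} permits $\alpha=0$, which is the actual reason one may take $\delta_2=0$ --- not any extra $p$-depletion of a dual form.

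Your treatment of the $p$-divisible part is also off target. Since $f$ is $p$-depleted, $\calE_f$ is supported on exponents coprime to $p$, whereas $\calE_{V_pU_p\mathcal{F}}$ is supported on multiples of $p$; the latter therefore cannot contribute ``the distinguished multiple of $f\calE_f$,'' and the asserted decomposition $U_p\mathcal{F}=\gamma f+g$ with $g$ $p$-depleted is not justified. The paper sidesteps this issue entirely: it writes $f=\sum_{p\nmid m}\alpha_m P(m,k,N)$ (possible because $a_f(pm)=0$ and the Petersson coefficient formula), takes the matching Maass--Poincar\'e preimage $Q=\sum_{p\nmid m}\alpha_m m^{1-k}Q(-m,k,N)$, and uses the elementary Kloosterman-sum vanishing $K(m,np,p^2c)=0$ for $p\nmid m$ to conclude that $a_Q^+(pn)=0$ for all $n$, i.e.\ the relevant normalized mock modular form has no $p$-divisible Fourier coefficients at all (this also guarantees the $p$-integrality needed for the interpolation). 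To repair your argument you would need both the algebraic normalization step and either this vanishing or a proof that $U_p$ of the normalized form is weakly holomorphic, so that its Eichler-integral contribution can be absorbed into $\calQ_f$.
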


\begin{remark}
The proof of Theorem~\ref{main} shows that $ \mathcal{E}_f$ is also a weight $2-k$ $p$-adic modular form. In fact, it is a weight $2-k$ cuspidal $p$-adic modular form, a $p$-adic limit of cusp forms whose weights have $p$-adic limit $2-k$.
Therefore, the mixed mock modular form $\delta_1 f\calL_f+\delta_2 f\mathcal{E}_f$ is the linear combination of a weight 2 weakly holomorphic $p$-adic modular forms and a weight
2 cuspidal $p$-adic modular form. Moreover, if $f$ has CM, then $\L(f,f;\tau)$ is a linear combination of  the weight 2 weakly holomorphic $p$-adic modular form $f\mathcal{L}_f$ and
a quasimodular form $\calQ_f$.
\end{remark}

The proof of Theorem~\ref{main} makes use of the theory of harmonic Maass forms and the earlier work of the second two authors  \cite{MO14}. Theorem 1.1 of \cite{MO14}
implies that $\L(f,f;\tau)$ is a linear combination of a mixed mock modular form and a weight 2 weakly holomorphic quasimodular form. Therefore, to prove Theorem~\ref{main},
it suffices to show that these mixed mock modular forms can be decomposed as a linear combination of a weight 2 weakly holomorphic $p$-adic modular form and
 $f\mathcal{E}_f$. We make use of previous work on the algebraic normalizations of mock modular forms
 \cite{BOR, GKO} and the theory of
various types of Poincar\'e series (see Section~\ref{PoincareSeries}) to establish the existence of these linear combinations. The relationships between the relevant Poincar\'e series under the differential operators $D^{k-1}$ and $\xi_{2-k}$ (see Section~\ref{HMF})
plays a critical role in the proof of the theorem.
The hypotheses in Theorem~\ref{main}, combined with the theory of newforms,  implies that the coefficients of the relevant Poincar\'e series  vanish for those exponents that are
divisible by $p$. This fact depends on an elementary lemma about Kloosterman sums (see Lemma~\ref{Kloost}). These results, combined with some of the more elementary features of Serre's theory of $p$-adic modular forms,
proves that the mixed mock modular form can be viewed as a weight 2 $p$-adic modular form, which completes the proof of Theorem~\ref{main}.
In Section~\ref{example} we give a detailed example illustrating Theorem~\ref{main} and some its consequences concerning the $3$-adic properties of the symmetrized
shifted convolution special values for the unique weight 4 newform on $\Gamma_0(9)$.

\section{Harmonic Maass forms}\label{HMF}
In this brief section we recall the definition and most important facts about harmonic weak Maass forms. For more detailed information we refer the reader to \cite{Ono08,ZagierBourbaki} and the references therein. Throughout, we denote by $\HH$ the upper-half of the complex plane, and we write
 $\tau= x + iy \in \HH$, while $k$ is always an even integer. With this notation, we introduce the weight $k$ \emph{hyperbolic Laplacian}
\[\Delta_k:=-y^2\left(\frac{\partial^2}{\partial x^2}+\frac{\partial^2}{\partial y^2}\right)+iky\left(\frac{\partial}{\partial x}+i\frac{\partial}{\partial y}\right).\]
Recall the definition of the weight $k$ \emph{slash operator}. For a function $f:\HH\rightarrow\C$ and $\gamma = \left(\begin{smallmatrix}
                                                                                                                   a & b \\ c & d 
                                                                                                                 \end{smallmatrix}\right)	
\in\SL_2(\Z)$, we let
\[f|_k\gamma(\tau):=(c\tau+d)^{-k}f\left(\frac{a\tau+b}{c\tau+d}\right).
\]
                                    
A smooth function $f:\HH\rightarrow\C$ is called a \emph{harmonic (weak) Maass form}\footnote{We often omit the word `weak'.} of weight $2-k\in 2\Z$ on $\Gamma_0(N)$ if the following conditions hold:
\begin{enumerate}
\item $f|_{2-k}\gamma=f$ for all $\gamma\in\Gamma_0(N)$;
\item $\Delta_{2-k}(f)=0$;
\item There exists a polynomial $P_f(q)=\sum_{n\leq 0} c_f^{+}(n)q^n\in \C[q^{-1}]$ such that
$f(\tau)-P_f(q)=O(e^{-\varepsilon y})$ as $y\rightarrow +\infty$ for some $\varepsilon >0$. Analogous conditions are required at all cusps.
\end{enumerate}
The vector space of harmonic Maass forms of weight $2-k$ on $\Gamma_0(N)$ is denoted by $H_{2-k}(\Gamma_0(N))$.

\begin{remark} Weight $2-k$ weakly holomorphic modular forms are annihilated by $\Delta_{2-k}$, and so we naturally
have that $M_{2-k}^!(\Gamma_0(N))\subset H_{2-k}(\Gamma_0(N))$.
\end{remark}

The Fourier expansions of such forms split into a {\it holomorphic part} and a  {\it non-holomorphic part}.
\begin{lemma}
A harmonic Maass form $f$ of weight $2-k$ has a splitting
\begin{displaymath}
f(\tau)=f^+(\tau)+f^-(\tau),
\end{displaymath}
where for some $m_0\in\Z$ and $n_0\in\N$
we have the Fourier expansions
\[f^+(\tau):=\sum\limits_{n=m_0}^\infty c_f^+(n)q^n,\]
and
\[f^-(\tau):=\sum\limits_{\substack{n=n_0}}^\infty \overline{c_f^-(n)}n^{k-1}\Gamma(1-k;4\pi ny)q^{-n},\]
where $\Gamma(\alpha;x)$ denotes the usual incomplete Gamma-function.
\end{lemma}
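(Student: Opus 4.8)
The plan is to read off both Fourier expansions directly from the three defining properties of a harmonic Maass form. Throughout set $\kappa:=2-k$ and write $\tau=x+iy$. First I would exploit periodicity: since $\smallTmatrix\in\Gamma_0(N)$ and the weight $\kappa$ slash action of this matrix is simply $f|_{\kappa}\smallTmatrix(\tau)=f(\tau+1)$, property (1) gives $f(\tau+1)=f(\tau)$, so that $f$ admits a Fourier expansion
\[ f(\tau)=\sum_{m\in\Z}c_m(y)\,e^{2\pi imx}, \]
with each $c_m$ a smooth function on $(0,\infty)$.

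Next I would impose harmonicity. Applying the weight $\kappa$ Laplacian to a single term $c_m(y)e^{2\pi imx}$ and using that the exponentials $e^{2\pi imx}$ are linearly independent, the equation $\Delta_{\kappa}f=0$ becomes, for each frequency $m$, the second order linear ODE
\[ y\,c_m''(y)+\kappa\,c_m'(y)+2\pi m\kappa\,c_m(y)-4\pi^2m^2y\,c_m(y)=0. \]
I would solve this frequency by frequency. For $m=0$ the equation reduces to $y\,c_0''+\kappa\,c_0'=0$, whose solution space is spanned by the constant functions and by $y^{1-\kappa}$. For $m\neq0$ the solution space is again two-dimensional: one checks directly that $e^{-2\pi my}$ is a solution, so that $c_m(y)e^{2\pi imx}$ can contribute the holomorphic term $q^m$, and a reduction of order argument produces the second, genuinely non-holomorphic solution, which is expressible through the incomplete Gamma function and, for $m<0$, furnishes precisely the non-holomorphic terms recorded in $f^-$.

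Finally I would invoke the growth condition (3) to discard the unwanted solutions and to fix the ranges of summation. The non-holomorphic solution grows exponentially when $m>0$, and $y^{1-\kappa}$ grows polynomially when $m=0$; neither is compatible with $f-P_f(q)=O(e^{-\varepsilon y})$, so both are absent. When $m<0$ the holomorphic solution $q^m$ itself grows, so only finitely many such terms can occur; these assemble into the principal part $P_f$ and fix the lower bound $m_0\in\Z$ of $f^+$. The remaining non-holomorphic solutions decay and survive for all $m<0$, yielding $f^-$ as a sum over $n=-m\geq n_0$. Collecting the holomorphic pieces into $f^+$ and the incomplete Gamma pieces into $f^-$, and recording the surviving Fourier data as the coefficients $c_f^+(n)$ and (in the normalization displayed) $c_f^-(n)$, produces the claimed splitting; the analogous conditions at the remaining cusps give the same local shape there.

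I expect the heart of the matter to be the middle step, namely solving the ODE for $m\neq0$ and matching its second solution with the correct incomplete Gamma function. This is exactly where the precise exponent and normalization must be pinned down, and a sign or weight-shift slip there would propagate through the entire non-holomorphic part.
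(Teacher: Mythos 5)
The paper states this lemma without proof (it is recalled as standard, with a pointer to \cite{Ono08,ZagierBourbaki}), so there is no in-paper argument to compare against; your proposal is the standard derivation and its skeleton is sound: periodicity under $\smallTmatrix$ gives the Fourier expansion in $x$, the mode-by-mode ODE you write down, namely $y\,c_m''+\kappa c_m'+2\pi m\kappa c_m-4\pi^2m^2y\,c_m=0$ with $\kappa=2-k$, is exactly what $\Delta_{\kappa}f=0$ produces, the solution space is two-dimensional with $e^{-2\pi my}$ as one solution, and the growth condition correctly kills the exponentially growing branch for $m>0$, the $y^{1-\kappa}=y^{k-1}$ branch at $m=0$, and all but finitely many holomorphic terms with $m<0$.

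The one step you defer is the only one that carries real content, and you should carry it out: reduction of order gives the second solution $e^{-2\pi my}\int^y t^{-\kappa}e^{4\pi mt}\,dt$, so for $m=-n<0$ the decaying branch is $e^{2\pi ny}\int_y^\infty t^{k-2}e^{-4\pi nt}\,dt=(4\pi n)^{1-k}\Gamma(k-1;4\pi ny)\,e^{2\pi ny}$, i.e.\ the mode is a multiple of $\Gamma(k-1;4\pi ny)\,q^{-n}$. Doing this computation exposes a discrepancy with the statement as printed: $\Gamma(1-k;4\pi ny)\,q^{-n}$ is \emph{not} annihilated by $\Delta_{2-k}$ (a direct check gives a nonzero remainder proportional to $2k-2$), whereas $\Gamma(k-1;4\pi ny)\,q^{-n}$ is. The first argument of the incomplete Gamma function should be $1-\kappa=k-1$, consistent with the paper's own Lemma~\ref{Ffourier}, where the weight $2-k$ Maass--Poincar\'e series has nonholomorphic terms built from $\Gamma(k-1;4\pi|n|y)$; the $\Gamma(1-k;\cdot)$ in the statement is a weight-substitution slip. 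So: finish the reduction-of-order step explicitly, and note that it corrects the normalization in the statement rather than reproducing it verbatim.
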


The function $f^{+}$ (resp. $f^{-}$) is referred to as the {\it holomorphic part}  (resp. {\it nonholomorphic part}) of the harmonic Maass form $f$.
The holomorphic part of a harmonic Maass form is called a \emph{mock modular form} whenever $f^{-}$ is nontrivial.
In many contexts one encounters products of mock modular forms with (weakly holomorphic) modular forms of some fixed weight. These objects and their linear combinations are called \emph{mixed mock modular forms}.

Harmonic Maass forms are related naturally to classical elliptic modular forms by means of various differential operators.
The following lemma includes one of the most fundamental relationships concerning the
 $\xi$-operator which was introduced Bruinier and Funke (see Proposition 3.2 and Theorem 3.7 of \cite{BF04}).
\begin{proposition} If $f\in H_{2-k}(\Gamma_0(N))$, then
$ \xi_{2-k}(f):=2iy^{2-k}\overline{\frac{\partial f}{\partial\overline{\tau}}}$ is a weight $k$ cusp form on $\Gamma_0(N)$.
The resulting map $\xi_{2-k}:H_{2-k}(\Gamma_0(N))\rightarrow S_k(\Gamma_0(N))$
is surjective,  and it has kernel $M_{2-k}^!(\Gamma_0(N))$. Moreover, we have that
\[\xi_{2-k}(f)(\tau)=-(4\pi)^{k-1}\sum\limits_{n=n_0}^\infty c_f^-(n)q^n.\]
\end{proposition}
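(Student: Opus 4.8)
The plan is to treat the assertions of the proposition in the order covariance, holomorphicity, explicit Fourier expansion (which yields cuspidality), kernel, and surjectivity, with surjectivity being the substantive point.

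First I would record the covariance of $\xi_{2-k}$ under the weight action. A direct computation with $\gamma = \left(\begin{smallmatrix} a & b \\ c & d\end{smallmatrix}\right)\in\SL_2(\Z)$, using $\operatorname{Im}(\gamma\tau) = y/|c\tau+d|^2$, $\frac{\partial(\gamma\tau)}{\partial\tau} = (c\tau+d)^{-2}$, and the chain rule for $\partial/\partial\overline{\tau}$, shows that
\[
\xi_{2-k}(f|_{2-k}\gamma) = (\xi_{2-k}f)|_k\gamma .
\]
In other words, $\xi_{2-k}$ intertwines the weight $2-k$ and weight $k$ actions; the complex conjugation in its definition is exactly what converts the factor $\overline{(c\tau+d)}$ arising from $\partial\overline{\tau}$ into the holomorphic automorphy factor $(c\tau+d)^{-k}$. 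Condition (1) in the definition of $H_{2-k}(\Gamma_0(N))$ then gives $(\xi_{2-k}f)|_k\gamma = \xi_{2-k}f$ for all $\gamma\in\Gamma_0(N)$. For holomorphicity I would invoke the standard factorization of the hyperbolic Laplacian,
\[
\Delta_{2-k} = -\,\xi_k\circ\xi_{2-k},
\]
so that the harmonicity hypothesis $\Delta_{2-k}f=0$ forces $\xi_k(\xi_{2-k}f)=0$. Since $\xi_k g = 2iy^k\overline{\partial g/\partial\overline{\tau}}$ vanishes precisely when $\partial g/\partial\overline{\tau}=0$, I conclude that $\xi_{2-k}f$ is holomorphic on $\HH$.

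Next I would compute the Fourier expansion directly, which simultaneously establishes the displayed formula and cuspidality. Because $\partial/\partial\overline{\tau}$ annihilates the holomorphic part $f^+$, one has $\xi_{2-k}f = \xi_{2-k}f^-$, and it suffices to apply the operator termwise to the non-holomorphic part. Using $\partial y/\partial\overline{\tau} = i/2$ together with the derivative rule $\frac{d}{dw}\Gamma(s;w) = -w^{s-1}e^{-w}$ for the incomplete Gamma-function, each term of $f^-$ contributes, after taking complex conjugates (which removes the overline on $c_f^-(n)$ and, via $\overline{q^{-n}}$, produces a growing exponential that cancels against $e^{-4\pi ny}$), a holomorphic monomial proportional to $c_f^-(n)q^n$; collecting the normalizing constants yields
\[
\xi_{2-k}(f)(\tau) = -(4\pi)^{k-1}\sum_{n=n_0}^\infty c_f^-(n)q^n .
\]
Since the expansion begins at $n_0\in\N$, the form vanishes at $\infty$; repeating the argument at every cusp, using the analogous growth conditions in (3), shows that $\xi_{2-k}f$ is holomorphic and cuspidal, so $\xi_{2-k}f\in S_k(\Gamma_0(N))$. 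The kernel is then immediate: $\xi_{2-k}f=0$ if and only if $\partial f/\partial\overline{\tau}=0$, i.e.\ $f$ is holomorphic on $\HH$; combined with the weight $2-k$ modularity and the meromorphic-type growth permitted by condition (3), this is exactly the statement that $f\in M^!_{2-k}(\Gamma_0(N))$, and conversely every weakly holomorphic modular form lies in the kernel.

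The main obstacle is surjectivity, which I would establish by explicitly exhibiting preimages via Poincaré series (the same circle of ideas developed in Section~\ref{PoincareSeries}). For each $m\geq 1$ one constructs a weight $2-k$ Maass--Poincaré series $F_m\in H_{2-k}(\Gamma_0(N))$ from the $\Gamma_0(N)$-average of a seed built from the $M$-Whittaker function; a term-by-term application of $\xi_{2-k}$, using the differentiation formulas for Whittaker functions, shows that $\xi_{2-k}F_m$ is a nonzero constant multiple of the classical weight $k$ holomorphic Poincaré series $P_m\in S_k(\Gamma_0(N))$. Since the $P_m$ with $m\geq 1$ span $S_k(\Gamma_0(N))$, surjectivity follows. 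The delicate points are the convergence of the weight $2-k\leq 0$ Poincaré series, which requires either analytic continuation in an auxiliary spectral parameter or a Hecke-type regularization, and verifying that the image is the holomorphic (not merely weakly holomorphic) Poincaré series, so that precisely $S_k(\Gamma_0(N))$ is hit. An alternative, less computational route deduces surjectivity from the nondegeneracy of the Petersson pairing $(g,f)\mapsto\langle g,\xi_{2-k}f\rangle$, realized as a constant term of the product $g\,f^+$, which identifies $S_k(\Gamma_0(N))$ with the image of $\xi_{2-k}$ by a dimension count; I would keep the Poincaré series construction as the primary approach, since it is the one employed elsewhere in the paper.
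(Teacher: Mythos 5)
Your proposal is correct in substance, but note first that the paper itself does not prove this proposition: it is quoted verbatim from Bruinier--Funke (Proposition~3.2 and Theorem~3.7 of \cite{BF04}), so there is no in-paper argument to match. Your proof is the standard one, and its two halves line up with the two available sources in an interesting way. The routine parts (intertwining $\xi_{2-k}(f|_{2-k}\gamma)=(\xi_{2-k}f)|_k\gamma$, holomorphicity via the factorization $\Delta_{2-k}=-\xi_k\circ\xi_{2-k}$, the termwise Fourier computation, and the identification of the kernel with $M^!_{2-k}(\Gamma_0(N))$) are exactly as in the literature. For surjectivity, your primary route --- hitting the classical Poincar\'e series $P(m,k,N;\tau)$, which span $S_k(\Gamma_0(N))$ by the Petersson coefficient formula, with Maass--Poincar\'e series --- is precisely the mechanism the paper itself develops later: Lemma~\ref{Ffourier} and Lemma~\ref{poincarerelationships} assert $\xi_{2-k}(Q(-m,k,N;\tau))=(4\pi)^{k-1}m^{k-1}(k-1)P(m,k,N;\tau)$, so your argument is the one implicitly on display in Section~\ref{PoincareSeries}. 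By contrast, the actual proof in \cite{BF04} is closer to your ``alternative, less computational route'': they obtain surjectivity from a duality/pairing argument rather than from Poincar\'e series. You are also right to flag the convergence issue: for $k=2$ the series converge only conditionally and require Hecke-type regularization or analytic continuation in a spectral parameter, which is why the clean Poincar\'e-series proof is usually stated for $k>2$ and extended to $k=2$ separately.

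One small discrepancy worth noting, though it is the paper's and not yours: the splitting lemma in Section~\ref{HMF} writes the nonholomorphic part with $n^{k-1}\Gamma(1-k;4\pi ny)$, and if you run your termwise computation with that literal normalization the powers of $y$ do \emph{not} cancel (one is left with a stray $y^{2-2k}$) and the constant comes out as $(4\pi)^{1-k}$ rather than $(4\pi)^{k-1}$. With the standard convention $\Gamma(k-1;4\pi ny)$ --- the one the paper itself uses in Lemma~\ref{Ffourier} --- together with the prefactor $n^{1-k}$ in place of $n^{k-1}$, the exponentials and $y$-powers cancel exactly as you describe and the displayed formula $\xi_{2-k}(f)=-(4\pi)^{k-1}\sum_{n\geq n_0}c_f^-(n)q^n$ results. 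Your sketch glosses over this (``collecting the normalizing constants yields\dots''), so if you write it out, do the bookkeeping against the corrected convention; otherwise the argument as you have structured it is complete and correct.
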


\begin{remark}
The cusp form $-(4\pi)^{1-k}\xi_{2-k}(f)$ is called the \emph{shadow} of the mock modular form $f^+$. Note that in the literature the normalization of the shadow may differ from the one chosen here.
\end{remark}

\section{Poincar\'{e} series}\label{PoincareSeries}

\subsection{The Poincar\'e series $P(m,k,N)$ and $Q(-m,k,N)$}
One very explicit way of constructing harmonic Maass forms with given (cuspidal) shadow uses Poincar\'e series, see for example \cite{BringOPNAS}. In this section, we review the basic facts about them which we will use later.

For $m\in\Z$, let $\varphi_m:\R^+ \rightarrow \C$ be a function which is $O(y^\alpha)$ for some $\alpha>0$ as $y\rightarrow 0$. Define the function $\varphi_m^*(\tau):=\varphi_m(y)\exp(2\pi imx)$. Further let
$\Gamma_\infty:=\left\{\pm \left(\begin{smallmatrix} 1 & n \\ 0 & 1\end{smallmatrix}\right) \ :\ n\in\Z\right\}.$
Obviously, the function $\varphi_m^*$ is invariant under the action of $\Gamma_\infty$. We define the general Poincar\'e series $\P(m,k,N,\varphi_m;\tau)$ by
\begin{equation*}
\P(m,k,N,\varphi_m;\tau):=\sum\limits_{\gamma\in\Gamma_\infty\setminus\Gamma_0(N)}\varphi_m^*|_k\gamma(\tau).
\end{equation*}
This series (if absolutely convergent), defines a function on the upper half plane, which transforms like a modular form of weight $k$ on $\Gamma_0(N)$. 

We single out two special cases ($m\in \N$), the first one the classical Poincar\'e series
\begin{equation*}
P(m,k,N;\tau):=\P(m,k,N,\exp(-2\pi my);\tau),
\end{equation*}
the second one the Maass-Poincar\'e series
\begin{equation*}
Q(-m,k,N;\tau):=\P(-m,2-k,N,\Phi_{-m};\tau),
\end{equation*}
where $\Phi_{-m}(y):=\calM_{1-\frac k2}(4\pi my)$ and $\calM_s(y)$ is a modified version of the Whittaker function $M_{\nu,\mu}$ (see e.g. \cite{Ono08}, p. 39).

We next recall the definition of Kloosterman sums
\begin{equation*}
K(m,n,c):=
\sum_{d \pmod{c}^*} e\left(\frac{m\overline
d+nd}{c}\right),
\end{equation*}
where $e(\alpha):=e^{2\pi i \alpha}$. The
 sum over $d$ runs through the primitive residue classes
modulo $c$, and $\overline d$ denotes the multiplicative inverse
of $d$ modulo $c$.

The Fourier expansions of the classical Poincar\'e series (see e.g. \cite{Iwaniecbook}),  as well as those of the Maass-Poincar\'e series treated in e.g. \cite{BringOPNAS, Bruinier, Fay,
Hejhal, Niebur1}, involve infinite sums of Kloosterman sums weighted by Bessel functions.
The classical case is treated in the following lemma.

\begin{lemma}\label{Hfourier} If $k\geq 2$ is even and $m, N\in \N$, then
the following are true for the Poincar\'e series
\begin{equation*}
P(m,k,N;\tau)=q^{m}+\sum_{n=1}^{\infty} a_m (n)q^n.
\end{equation*}

\smallskip
\noindent
(1) We have that
$P(m,k,N;\tau)\in S_k(\Gamma_0(N))$
and the coefficients are given by
\begin{displaymath}
a_m(n)=2\pi
(-1)^{\frac{k}{2}}\left(\frac{n}{m}\right)^{\frac{k-1}{2}} 
\sum_{\substack{c>0\\c\equiv 0\pmod{N}}} \frac{K(m,n,c)}{c}\cdot
J_{k-1} \left(\frac{4\pi \sqrt{mn}}{c}\right),
\end{displaymath}
where $J$ denotes the usual $J$-Bessel function.\\
\smallskip
\noindent
(2) We have that
$P(-m,k,N;\tau)\in M_k^{!}(\Gamma_0(N))$ and for
positive integers $n$ we have
\begin{displaymath}
a_{-m}(n)=2\pi(-1)^{\frac{k}{2}}\left(\frac{n}{m}\right)^{\frac{k-1}{2}}
 \sum_{\substack{c>0\\ c\equiv
0\pmod{N}}}\frac{K(-m,n,c)}{c}\cdot I_{k-1}\left(\frac{4\pi
\sqrt{|mn|}}{c}\right),
\end{displaymath}
where $I$ denotes the usual $I$-Bessel function.
\end{lemma}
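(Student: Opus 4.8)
The plan is to compute the Fourier expansion of $P(\pm m,k,N;\tau)$ directly by the classical unfolding method, treating parts (1) and (2) in parallel since they differ only in the sign of the index of the seed. First I would record that the seed function is $\varphi_m^*(\tau)=e^{-2\pi m y}e(mx)=q^m$ in case (1) (and $\varphi_{-m}^*(\tau)=e^{2\pi m y}e(-mx)=q^{-m}$ in case (2)), so that
$$P(\pm m,k,N;\tau)=\sum_{\gamma\in\Gamma_\infty\backslash\Gamma_0(N)}q^{\pm m}\big|_k\gamma.$$
For even $k>2$ the majorant $\sum_{c,d}|c\tau+d|^{-k}$ gives absolute and locally uniform convergence, hence holomorphy and the weight $k$ transformation law on $\Gamma_0(N)$; the boundary case $k=2$ is handled by Hecke's trick, analytically continuing $\sum|c\tau+d|^{-k}|c\tau+d|^{-2s}$ to $s=0$. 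Inspecting the behavior at the cusps then settles the space membership: in case (1) the positive-index seed forces vanishing at every cusp, so $P(m,k,N;\tau)\in S_k(\Gamma_0(N))$, whereas in case (2) the identity coset contributes the principal part $q^{-m}$ at $\infty$ while the series remains holomorphic elsewhere, so $P(-m,k,N;\tau)\in M_k^!(\Gamma_0(N))$.

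The heart of the argument is the explicit Fourier expansion. I would parametrize $\Gamma_\infty\backslash\Gamma_0(N)$ by bottom rows $(c,d)$ with $c\geq 0$, $N\mid c$, and $\gcd(c,d)=1$, since left multiplication by $\Gamma_\infty$ fixes the bottom row; the class $c=0$ gives the identity coset and hence the seed term $q^{\pm m}$. For $c>0$ I would write $d=d_0+c\ell$ with $d_0$ ranging over primitive residues mod $c$ and $\ell\in\Z$, use the identity $\gamma\tau=\tfrac{a}{c}-\tfrac{1}{c(c\tau+d)}$ together with $a\equiv\overline{d_0}\pmod c$, and apply Poisson summation to the inner sum over $\ell$. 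The $n$-th Fourier coefficient then factors as
$$\sum_{d_0\bmod c}^{*} e\!\left(\frac{\pm m\overline{d_0}+n d_0}{c}\right)\cdot\int_{-\infty}^{\infty}(c\tau+d_0)^{-k}\,e\!\left(\frac{\mp m}{c(c\tau+d_0)}\right)e(-n\tau)\,dx,$$
so that summing over $d_0$ produces precisely the Kloosterman sum $K(\pm m,n,c)$, while the line integral depends only on $c,m,n,k$.

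The final step is to evaluate this integral in closed form. After the substitution $w=c\tau+d_0$ (a shift in $x$) and a rotation of the contour, the integral reduces to the classical Bessel integral representation: for positive index it yields the $J$-Bessel function $J_{k-1}(4\pi\sqrt{mn}/c)$, whereas the sign flip $m\mapsto-m$ converts the oscillatory integral into the one representing $I_{k-1}(4\pi\sqrt{mn}/c)$, which is exactly the $J$-versus-$I$ dichotomy between parts (1) and (2). Collecting the elementary prefactors from the integral gives the factor $2\pi(-1)^{k/2}(n/m)^{(k-1)/2}/c$, matching the claimed coefficients once combined with the Kloosterman sum. I expect this integral evaluation to be the main technical obstacle: one must track the branch of $(c\tau+d_0)^{-k}$, justify the contour deformation, and match normalizations so that all constants and the $J/I$ distinction come out correctly. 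Since this computation is entirely classical, I would cross-check the normalization against the treatments in \cite{Iwaniecbook, BringOPNAS, Fay}.
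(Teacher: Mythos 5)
The paper does not prove this lemma at all: it quotes the Fourier expansion as a classical fact, deferring to the references (e.g.\ \cite{Iwaniecbook}), and your unfolding argument --- coset parametrization by bottom rows, Poisson summation in $d$ producing the Kloosterman sums, and the Bessel integral evaluation with the $J$/$I$ dichotomy according to the sign of the index --- is exactly the standard computation carried out in those sources, with the correct normalization (note $(-1)^{k/2}=i^{k}$ for even $k$). Your proposal is correct and matches the argument the paper implicitly relies on.
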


\noindent For the case of Maass-Poincar\'e series we have the following result.

\begin{lemma}\label{Ffourier} If $k\geq 2$ is even and $m, N\geq 1$, then
$Q(-m,k,N;\tau)$ is in $H_{2-k}(\Gamma_0(N))$. Moreover, we have a Fourier expansion of the shape
$$
Q(-m,k,N;\tau) = (1-k) \left( \Gamma(k-1;4\pi my) - \Gamma(k-1) \right)
\, q^{-m} + \sum_{n\in\Z} c_m(n,y) \, q^n.
$$

\noindent (1) If $n<0$, then
\begin{displaymath}
\begin{split}
c_m(n,y)
  =2 \pi i^{k}  (1-k) \, &\Gamma(k-1;4 \pi |n| y)
  \left|\frac{n}{m}\right|^{\frac{1-k}{2}}\\
 &\ \ \ \ \times  \sum_{\substack{c>0\\c\equiv 0\pmod{N}}}
   \frac{K(-m,n,c)}{c}\cdot
 J_{k-1}\!\left(\frac{4\pi\sqrt{|mn|}}{c}\right).
\end{split}
\end{displaymath}

\noindent
(2) If $n>0$, then
$$
c_m(n,y)= - 2 \pi  i^k \Gamma(k)   \left( \frac{n}{m}
\right)^{\frac{1-k}{2}}
  \sum_{\substack{c>0\\c\equiv 0\pmod{N}}}
   \frac{K(-m,n,c)}{c}\cdot
 I_{k-1}\!\left(\frac{4\pi\sqrt{|mn|}}{c}\right).
$$

\noindent (3) If $n=0$, then
$$
c_m(0,y)=-(2\pi i)^{k} m^{k-1} \sum_{\substack{c>0\\c\equiv
0\pmod{N}}}
 \frac{K(-m,0,c)}{c^k}.
$$
\end{lemma}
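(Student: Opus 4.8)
The plan is to follow the classical unfolding computation for Maass–Poincar\'e series, as carried out in \cite{Bruinier, Fay, Hejhal, Niebur1}, adapting it to the present normalization. First I would check that the seed $\varphi_{-m}^*(\tau)=\calM_{1-\frac k2}(4\pi my)\,e(-mx)$ is annihilated by $\Delta_{2-k}$; this is precisely the defining property of the modified Whittaker function $\calM_s$ (see \cite{Ono08}), which is built so that $\calM_{1-\frac k2}(4\pi my)\,e(-mx)$ is a weight $2-k$ eigenfunction of the hyperbolic Laplacian with eigenvalue $0$. Since $\Delta_{2-k}$ commutes with the weight $2-k$ slash action, the series $Q(-m,k,N;\tau)$ inherits harmonicity wherever it converges absolutely, so condition (2) in the definition of $H_{2-k}(\Gamma_0(N))$ holds; invariance under $\Gamma_0(N)$ is immediate from the construction. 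Absolute convergence holds for $k>2$ by the usual majorization of the coset sum by a real-analytic Eisenstein series; the boundary case $k=2$ requires a separate Hecke-type analytic-continuation argument. Together with the explicit expansion computed below, which exhibits the required principal part and exponential decay at the cusps, this places $Q(-m,k,N;\tau)$ in $H_{2-k}(\Gamma_0(N))$.

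To obtain the Fourier expansion I would split the coset sum according to the lower-left entry $c$ of the representatives $\gamma=\smallm{a}{b}{c}{d}$. The identity coset ($c=0$) contributes the seed itself, and expanding $\calM_{1-\frac k2}(4\pi my)\,e(-mx)$ by means of the known relation between $\calM_s$ and the incomplete Gamma function yields the principal term $(1-k)\bigl(\Gamma(k-1;4\pi my)-\Gamma(k-1)\bigr)q^{-m}$. For the terms with $c>0$ (necessarily $c\equiv 0 \pmod N$), the left cosets are indexed by pairs $(c,d)$ with $d$ coprime to $c$; writing $d=d_0+c\ell$ with $d_0$ a primitive residue modulo $c$ and $\ell\in\Z$, I would extract the $n$-th Fourier coefficient by integrating against $e(-nx)$ over $[0,1]$ and unfolding the sum over $\ell$ into an integral over all of $\R$. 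Using $a\equiv\overline{d_0}\pmod c$ to rewrite $e(-m\,\mathrm{Re}(\gamma\tau))$, the sum over $d_0$ collapses exactly to the Kloosterman sum $K(-m,n,c)$, leaving behind a single real integral of Hankel type.

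The remaining step is to evaluate this integral transform, a standard Bessel integral whose value depends on the sign of $n$. For $n<0$ one obtains the factor $\Gamma(k-1;4\pi|n|y)$ together with $J_{k-1}\!\left(4\pi\sqrt{|mn|}/c\right)$, which assembles the non-holomorphic part of the expansion; for $n>0$ one obtains $\Gamma(k)$ together with $I_{k-1}\!\left(4\pi\sqrt{|mn|}/c\right)$, giving the holomorphic part; and for $n=0$ the integral degenerates, producing the weighting $c^{-k}$ and the degenerate Kloosterman sum $K(-m,0,c)$. Collecting the three cases with the correct powers of $i$, $2\pi$, and the factor $|n/m|^{(1-k)/2}$ gives the stated formulas.

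I expect the main obstacle to be the precise evaluation of these Bessel integrals and the careful bookkeeping of constants — in particular, correctly reading off the incomplete Gamma factors from the modified Whittaker function $\calM_s$ and matching normalizations so that the holomorphic and non-holomorphic parts agree with the statement. The genuinely delicate analytic point is the case $k=2$, where the series is only conditionally convergent and the Hecke continuation must be invoked to justify the termwise computation.
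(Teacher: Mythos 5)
The paper offers no proof of this lemma at all---it simply points to \cite{BringOPNAS, Bruinier, Fay, Hejhal, Niebur1}---and your outline is exactly the standard unfolding computation carried out in those references: the $c=0$ coset gives the principal part via the incomplete-Gamma expression for $\calM_{1-\frac k2}$, Poisson summation over $d=d_0+c\ell$ collapses the $d_0$-sum to $K(-m,n,c)$, and the remaining integral evaluates to the $J$-, $I$-, or degenerate Bessel factor according to the sign of $n$. Your proposal is correct, including the appropriate caveat that absolute convergence of the weight $2-k$ series holds only for $k>2$ and that the boundary case $k=2$ requires Hecke-type analytic continuation.
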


These two kinds of Poincar\'e series are intimately related via the $\xi$-operator from the theory of harmonic Maass forms. The proof of this fact follows
easily from the formulas above.

\begin{lemma}\label{poincarerelationships}
If $k\geq 2$ is even and $m, N\geq 1$, then
\begin{displaymath}
\xi_{2-k}(Q(-m,k,N;\tau))= ( 4\pi)^{k-1} m^{k-1}(k-1)
P(m,k,N;\tau)\in S_k(\Gamma_0(N)).
\end{displaymath}
\end{lemma}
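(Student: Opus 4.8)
The plan is to exploit that $\xi_{2-k}$ is conjugate-linear and annihilates every holomorphic function, so that $\xi_{2-k}(Q(-m,k,N;\tau))$ depends only on the non-holomorphic terms of the Fourier expansion recorded in Lemma~\ref{Ffourier}. Inspecting that expansion, the only genuinely non-holomorphic terms are the piece $(1-k)\Gamma(k-1;4\pi my)q^{-m}$ of the principal part (the companion $-(1-k)\Gamma(k-1)q^{-m}$ is a constant multiple of $q^{-m}$, hence holomorphic) together with the coefficients $c_m(n,y)$ for $n<0$ from part (1), each of which carries the factor $\Gamma(k-1;4\pi|n|y)$. By contrast, the coefficients in parts (2) and (3), for $n\geq 0$, are independent of $y$ and therefore holomorphic, so $\xi_{2-k}$ kills them. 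Thus I would first discard all holomorphic contributions and reduce to applying $\xi_{2-k}$ termwise to the surviving incomplete-Gamma terms.

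The key computation I would isolate as a preliminary step is that for every integer $\ell>0$,
\[
\xi_{2-k}\!\left(\Gamma(k-1;4\pi\ell y)\,q^{-\ell}\right)=-(4\pi\ell)^{k-1}q^{\ell}.
\]
This follows from a direct differentiation: writing $\partial_{\overline\tau}=\tfrac12(\partial_x+i\partial_y)$ and using $\frac{d}{dx}\Gamma(s;x)=-x^{s-1}e^{-x}$, the $y$-derivative of the $\Gamma$-term produces the factor $(4\pi\ell y)^{k-2}e^{-4\pi\ell y}$, and after multiplying by $2iy^{2-k}$ and conjugating, the powers of $y$ cancel and one is left with $-(4\pi\ell)^{k-1}q^{\ell}$. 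Applying this termwise, the principal part contributes $(k-1)(4\pi m)^{k-1}q^{m}$, matching the leading term $q^{m}$ of $P(m,k,N;\tau)$ up to the factor $(4\pi)^{k-1}m^{k-1}(k-1)$. For the $n<0$ terms I would set $n'=-n>0$ and use that the Kloosterman sums are real with $K(-m,-n',c)=K(m,n',c)$, together with $i^{k}=(-1)^{k/2}$, to see that the resulting coefficient of $q^{n'}$ is exactly $(4\pi)^{k-1}m^{k-1}(k-1)\,a_{m}(n')$, where $a_m(n')$ is the coefficient in Lemma~\ref{Hfourier}(1); here the half-integral powers $(n'/m)^{(1-k)/2}$ and $(n'/m)^{(k-1)/2}$ combine with $(4\pi n')^{k-1}$ to produce precisely $m^{k-1}$. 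Summing over $n'\geq 1$ and adding the principal contribution then yields $(4\pi)^{k-1}m^{k-1}(k-1)\,P(m,k,N;\tau)$.

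A more economical route, which I would actually use to keep the bookkeeping minimal, is to invoke the $\xi$-operator proposition of Bruinier and Funke from Section~\ref{HMF}, which gives $\xi_{2-k}(f)=-(4\pi)^{k-1}\sum_n c_f^-(n)q^n$ directly in terms of the non-holomorphic Fourier coefficients $c_f^-(n)$. Reading off these coefficients from Lemma~\ref{Ffourier} and comparing with Lemma~\ref{Hfourier}(1) immediately produces the stated relation, and also reconfirms that $\xi_{2-k}(Q(-m,k,N;\tau))$ lands in $S_k(\Gamma_0(N))$. I do not expect any conceptual obstacle: the entire content is the preliminary differentiation identity plus careful tracking of the constants $(1-k)$, $i^k=(-1)^{k/2}$, the Kloosterman reality $K(-m,-n,c)=K(m,n,c)$, and the cancellation of the fractional powers of $n'/m$. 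The one point that genuinely makes the two Poincar\'e series line up --- rather than merely being proportional family by family --- is that the reality and symmetry of the Kloosterman sums convert the $K(-m,n,c)$ appearing in $Q$ into the $K(m,n',c)$ appearing in $P$, so this is the step I would check most carefully.
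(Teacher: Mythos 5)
Your proposal is correct, and it is essentially the paper's own argument: the authors simply assert that the lemma ``follows easily from the formulas above,'' meaning exactly the comparison of the Fourier expansions in Lemmas~\ref{Hfourier} and~\ref{Ffourier} via the termwise identity $\xi_{2-k}\bigl(\Gamma(k-1;4\pi\ell y)q^{-\ell}\bigr)=-(4\pi\ell)^{k-1}q^{\ell}$, the conjugate-linearity of $\xi_{2-k}$, and the reality and symmetry $K(-m,-n,c)=K(m,n,c)$ of the Kloosterman sums, all of which you carry out correctly. Your bookkeeping of the constants (the factor $(1-k)$, $i^k=(-1)^{k/2}$, and the cancellation of the powers of $n'/m$ against $(4\pi n')^{k-1}$ to leave $m^{k-1}$) checks out.
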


\subsection{Kloosterman sums}
Apart from their appearance in Fourier expansions of Poincar\'e series, Kloosterman sums are an important object of study for their own sake. We recall the following well-known multiplicativity property, which follows from the Chinese Remainder Theorem (see, e.g. equation (1.59) in \cite{IK}).
\begin{lemma}\label{mult}
For coprime positive integers $c$ and $d$ we have
\[K(m,n,c_1c_2)=K\left(m\overline{c_2},n\overline{c_2},c_1\right)K\left(m\overline{c_1},n\overline{c_1},c_2\right),\]
where $\overline{c_2}$ is the multiplicative inverse of $c_2$ modulo $c_1$ and vice versa.
\end{lemma}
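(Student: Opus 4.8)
The plan is to prove the identity by a direct computation built on the Chinese Remainder Theorem (CRT), exploiting the fact that the exponential in the definition of $K(m,n,c_1c_2)$ factors once the summation variable is decomposed modulo $c_1$ and $c_2$ separately. Since $\gcd(c_1,c_2)=1$, the CRT furnishes a bijection of the primitive residue classes
\[(\Z/c_1c_2\Z)^*\;\xrightarrow{\;\sim\;}\;(\Z/c_1\Z)^*\times(\Z/c_2\Z)^*,\qquad d\longmapsto(d_1,d_2),\]
where $d_1\equiv d\pmod{c_1}$ and $d_2\equiv d\pmod{c_2}$. Under this bijection the inverse $\overline d$ taken modulo $c_1c_2$ corresponds to the pair $(\overline{d_1},\overline{d_2})$, where these inverses are taken modulo $c_1$ and $c_2$ respectively. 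Summing over $d$ modulo $c_1c_2$ is then the same as summing independently over $d_1$ modulo $c_1$ and $d_2$ modulo $c_2$, so it suffices to show that the summand factors accordingly.

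The technical heart of the argument is the partial-fraction identity
\[\frac{1}{c_1c_2}\equiv\frac{\overline{c_2}}{c_1}+\frac{\overline{c_1}}{c_2}\pmod 1,\]
where $\overline{c_2}$ denotes the inverse of $c_2$ modulo $c_1$ and $\overline{c_1}$ the inverse of $c_1$ modulo $c_2$. This is immediate: clearing denominators, one checks that $\overline{c_2}\,c_2+\overline{c_1}\,c_1$ is congruent to $1$ both modulo $c_1$ and modulo $c_2$, hence to $1$ modulo $c_1c_2$ by CRT.

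Multiplying this identity by $m\overline d+nd$, I would rewrite the exponent in the summand of $K(m,n,c_1c_2)$ as
\[\frac{m\overline d+nd}{c_1c_2}\equiv\frac{(m\overline{c_2})\overline d+(n\overline{c_2})d}{c_1}+\frac{(m\overline{c_1})\overline d+(n\overline{c_1})d}{c_2}\pmod 1.\]
In the first fraction only the residue of $d$ modulo $c_1$ matters, so one may replace $d$ by $d_1$ and $\overline d$ by $\overline{d_1}$; the first term then becomes exactly the summand of $K(m\overline{c_2},n\overline{c_2},c_1)$ indexed by $d_1$. Symmetrically, the second fraction becomes the summand of $K(m\overline{c_1},n\overline{c_1},c_2)$ indexed by $d_2$. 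Because $e(\cdot)$ is a homomorphism, the full summand factors as the product of these two contributions, and summing over the CRT bijection yields
\[K(m,n,c_1c_2)=K(m\overline{c_2},n\overline{c_2},c_1)\,K(m\overline{c_1},n\overline{c_1},c_2),\]
as claimed.

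The only genuine obstacle is bookkeeping: one must carefully distinguish the inverse $\overline d$ computed modulo $c_1c_2$ from the inverses $\overline{d_1},\overline{d_2}$ computed modulo $c_1,c_2$, and likewise keep track of which of $\overline{c_1},\overline{c_2}$ is inverted modulo which modulus. Once the CRT bijection is fixed and the partial-fraction identity is in place, no estimates or analytic input are required and the result is purely formal.
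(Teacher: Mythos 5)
Your proof is correct and is precisely the standard Chinese Remainder Theorem argument that the paper itself invokes: the paper gives no proof, deferring to equation (1.59) of Iwaniec--Kowalski, and your reciprocity identity $\frac{1}{c_1c_2}\equiv\frac{\overline{c_2}}{c_1}+\frac{\overline{c_1}}{c_2}\pmod{1}$ combined with the compatibility $\overline{d}\equiv\overline{d_1}\pmod{c_1}$ is exactly the content of that reference. The bookkeeping you flag (which inverse is taken modulo which modulus) is handled correctly, so nothing is missing.
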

This implies the following elementary, but important observation.
\begin{lemma}\label{Kloost}
Let $p$ be a prime and $m,n,c\in\N$ such that $p\nmid m$. Then we have
\[K\left(m,np,p^2c\right)=0.\]
\end{lemma}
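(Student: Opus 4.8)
The goal is to prove that $K(m, np, p^2c) = 0$ whenever $p \nmid m$. The plan is to leverage the multiplicativity result in Lemma~\ref{mult} to separate the $p$-power part of the modulus from the part coprime to $p$, and then directly evaluate the resulting Kloosterman sum modulo the full power of $p$ dividing $p^2c$. Writing $p^2 c = p^{2+v} c'$ where $p \nmid c'$ and $v = \operatorname{ord}_p(c) \geq 0$, the multiplicativity lemma reduces the computation to a product of two Kloosterman sums, one to modulus $p^{2+v}$ and one to modulus $c'$. Since $p \nmid c'$, the factor to modulus $c'$ causes no trouble; the vanishing must therefore come entirely from the prime-power factor $K(m', n'p, p^{2+v})$, where $m', n'$ are the appropriate inverse-twisted versions of $m, np$ (and crucially $p \nmid m'$ since $p\nmid m$ and the twist is by a unit mod $p^{2+v}$, while $p \mid n'p$).

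Thus the heart of the matter is to show that a Kloosterman sum $K(a, b, p^{e})$ with $e \geq 2$, $p \nmid a$, and $p \mid b$ vanishes. First I would write out the definition
\[
K(a,b,p^e) = \sum_{\substack{d \bmod p^e \\ p \nmid d}} e\!\left( \frac{a \overline{d} + b d}{p^e} \right).
\]
The standard technique for prime-power Kloosterman sums is to isolate the top residue digit: parametrize the unit $d$ as $d = d_0(1 + p^{e-1} t)$ with $d_0$ ranging over units mod $p^{e-1}$ and $t$ ranging mod $p$. Then $\overline{d} \equiv \overline{d_0}(1 - p^{e-1}t) \pmod{p^e}$, so the exponent splits as
\[
\frac{a\overline{d_0}(1 - p^{e-1}t) + b d_0(1 + p^{e-1}t)}{p^e}.
\]
Summing over $t \bmod p$ produces an inner geometric sum $\sum_{t \bmod p} e\!\left( \frac{(-a\overline{d_0} + b d_0)\,t}{p}\right)$, which equals $p$ if $-a\overline{d_0} + b d_0 \equiv 0 \pmod p$ and vanishes otherwise. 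The whole sum therefore collapses to a sum over those $d_0$ satisfying the congruence $b d_0^2 \equiv a \pmod p$.

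The final step is where the hypotheses decide everything: since $p \mid b$, the congruence $b d_0^2 \equiv a \pmod p$ reads $0 \equiv a \pmod p$, which is impossible because $p \nmid a$. Hence no residue $d_0$ contributes, every inner geometric sum is zero, and $K(a,b,p^e) = 0$. I expect the main obstacle to be purely bookkeeping: tracking how the unit twists $\overline{c_2}, \overline{c_1}$ in Lemma~\ref{mult} interact with the divisibility conditions, so that after reduction the prime-power factor genuinely has the form $K(a, b, p^e)$ with $p \nmid a$ and $p \mid b$. Care is needed when $p \mid c$ (i.e. $v \geq 1$), since then $n'p$ could a priori be divisible by a higher power of $p$, but this only strengthens the conclusion $p \mid b$; the argument above requires only $p \mid b$ and $e \geq 2$, both of which hold since $p^2 \mid p^{2+v}$ regardless of $v$.
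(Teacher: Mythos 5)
Your proof is correct and follows essentially the same route as the paper: reduce to a prime-power modulus via the multiplicativity of Kloosterman sums, then split off the top $p$-adic digit of $d$ so that the inner sum over that digit becomes a complete sum of nontrivial $p$-th roots of unity (nontrivial precisely because $p\nmid m$ while the shift $np$ contributes only multiples of $p$). The only difference is cosmetic --- you parametrize the units multiplicatively as $d_0(1+p^{e-1}t)$ where the paper writes $d=s+\ell p^{r-1}$ additively --- and the vanishing criterion $bd_0^2\equiv a\pmod p$ you derive is the same condition the paper's inner sum encodes.
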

\begin{proof}
By Lemma \ref{mult} it suffices to show the assertion for $c$ a power of $p$, say $cp^2=p^r$ with $r\geq 2$. We write $d\in(\Z/p^r\Z)^*$ as $d=s+\ell p^{r-1}$ with $s\in(\Z/p^{r-1}\Z)^*$ and $\ell\in\Z/p\Z$. It is easily checked that then $\overline d=\overline s-\overline{s}^2\ell p^{r-1}$, where $\overline{s}$ is the multiplicative inverse of $s$ modulo $p^{r-1}$. Thus we can write
 
\begin{align*}
K(m,pn,p^r)&=\sum\limits_{s \pmod{p^{r-1}}^*}\ \sum\limits_{\ell \pmod{p}} e\left(\frac{m\left(\overline{s}-\overline{s}^2\ell p^{r-1}\right)+np\left(s+\ell p^{r-1}\right)}{p^r}\right)\\
           &=\sum\limits_{s \pmod{p^{r-1}}^*} e\left(\frac{m\overline{s}+pns}{p^r}\right)\sum\limits_{\ell\pmod{s}}e\left(-\frac{\ell\overline{s}}{p}\right).
\end{align*}
The inner sum runs over all $p$th roots of unity and is therefore zero,
which in turn implies the claim.
\end{proof}

\subsection{The special case of newforms in Theorem~\ref{main}}

Here we apply some of the previous results to the harmonic Maass forms and weakly holomorphic modular forms which are pertinent to Theorem~\ref{main}.

\begin{theorem}\label{theonewform}
Let $f$ be as in Theorem \ref{main}. Then the following are all true.
\begin{enumerate}
\item We have that $f$ may be expressed as a finite linear combination of the form
$$f (\tau) =\sum\limits_{p\nmid m}\alpha_m P(m,k,N; \tau),
$$
with $\alpha_m\in\C$.
\item In terms of the linear combination in (1), if
$Q(\tau):=\sum\limits_{p\nmid m}\frac{\alpha_m}{m^{k-1}}Q(-m,k,N; \tau)$, then
$$\xi_{2-k}(Q)=( 4\pi)^{k-1}(k-1)f.
$$
\item If $Q^+(\tau)=\sum_n a_Q^+(n)q^n$, then $a_Q^+(pn)=0$ for all $n\in\N$.
\item We have that $D^{k-1}\left(Q^+\right)\in M_k^!(\Gamma_0(N))$, where $D:=\tfrac{1}{2\pi i}\tfrac{\partial}{\partial\tau}$ denotes the renormalized holomorphic derivative.
\end{enumerate}
\end{theorem}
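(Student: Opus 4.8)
The plan is to treat the four assertions in turn, the crucial input being that a newform $f\in S_k(\Gamma_0(N))$ whose level is divisible by $p^2$ has vanishing $p$-th Hecke eigenvalue, $a_f(p)=0$, so that $f\mid U_p=0$ and, by multiplicativity, $a_f(pn)=0$ for every $n\ge 1$. For (1) I would use the classical fact that the Poincar\'e series $\{P(m,k,N;\tau):m\ge 1\}$ span $S_k(\Gamma_0(N))$, together with the Petersson unfolding identity $\langle g,P(m,k,N;\tau)\rangle=c_m\,b(m)$ for $g=\sum_n b(n)q^n$, with $c_m\neq 0$. Consequently $V:=\Span\{P(m,k,N;\tau):p\nmid m\}$ is exactly the orthogonal complement in $S_k(\Gamma_0(N))$ of the space of cusp forms supported on exponents divisible by $p$, so it suffices to show $\langle f,g\rangle=0$ for every such $g$. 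Writing $g=(g\mid U_p)\mid V_p$ and invoking the adjointness of $U_p$ and $V_p$ under the Petersson inner product, $\langle f,g\rangle$ is a nonzero multiple of $\langle f\mid U_p,\,g\mid U_p\rangle=0$. Hence $f\in V$, and since $S_k(\Gamma_0(N))$ is finite dimensional the representation is a finite sum. Finally, because the coefficients $a_f(n)$ are real and the $P(m,k,N;\tau)$ have real Fourier coefficients (Kloosterman sums and $J$-Bessel values are real), $f$ and all the $P(m,k,N;\tau)$ lie in the real subspace of $S_k(\Gamma_0(N))$, so the $\alpha_m$ may be chosen in $\R$; I will need this for (2).

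Given (1) with real $\alpha_m$, part (2) is a direct computation: $\xi_{2-k}$ is conjugate linear, so applying Lemma~\ref{poincarerelationships} termwise and using $\overline{\alpha_m}=\alpha_m$,
\[
\xi_{2-k}(Q)=\sum_{p\nmid m}\frac{\overline{\alpha_m}}{m^{k-1}}\,\xi_{2-k}\bigl(Q(-m,k,N;\tau)\bigr)=(4\pi)^{k-1}(k-1)\sum_{p\nmid m}\alpha_m\,P(m,k,N;\tau)=(4\pi)^{k-1}(k-1)f,
\]
the last equality being (1). For (3) I note that, by Lemma~\ref{Ffourier}(2), the coefficient of $q^{pn}$ (with $n\ge 1$, hence a positive exponent) in the holomorphic part of each $Q(-m,k,N;\tau)$ is a fixed constant times $\sum_{N\mid c}\frac{K(-m,pn,c)}{c}\,I_{k-1}\!\left(\tfrac{4\pi\sqrt{mpn}}{c}\right)$; the principal parts contribute only at the exponents $-m$ with $p\nmid m$ and so cannot interfere. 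Since $p^2\mid N\mid c$ and $p\nmid m$, every Kloosterman sum $K(-m,pn,c)$ vanishes by Lemma~\ref{Kloost} (whose proof uses only that the first entry is coprime to $p$). Summing over $m$ gives $a_Q^+(pn)=0$.

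For (4) I would observe that $Q\in H_{2-k}(\Gamma_0(N))$, being a finite linear combination of the Maass--Poincar\'e series $Q(-m,k,N;\tau)\in H_{2-k}(\Gamma_0(N))$ of Lemma~\ref{Ffourier}. I then invoke the result of Bruinier, Ono, and Rhoades that $D^{k-1}$ carries a weight $2-k$ harmonic Maass form to a weight $k$ weakly holomorphic modular form: by Bol's identity $D^{k-1}Q$ transforms with weight $k$ on $\Gamma_0(N)$, while a short computation shows it is holomorphic and equals $D^{k-1}(Q^+)$. Here the non-holomorphic part $Q^-$ is a non-holomorphic Eichler integral of the shadow $\xi_{2-k}(Q)$, on which $\partial_\tau^{k-1}$ acts through a factor polynomial of degree $k-2$ in $\tau$ and so vanishes; equivalently $\partial_{\bar\tau}\partial_\tau^{k-1}Q=\partial_\tau^{k-1}\bigl(c\,y^{k-2}\overline{\xi_{2-k}(Q)}\bigr)=0$. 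Hence $D^{k-1}(Q^+)\in M_k^!(\Gamma_0(N))$.

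The main obstacle is part (1): establishing that $f$ lies in the span of the Poincar\'e series whose index is coprime to $p$. This is exactly where the hypothesis $p^2\mid N$ enters, through $a_f(p)=0$, and it requires both the Petersson duality for Poincar\'e series and the adjointness of $U_p$ and $V_p$; the reality of the $\alpha_m$ must also be tracked so that the conjugate-linear computation in (2) closes up. The remaining parts are comparatively routine, relying on Lemma~\ref{Kloost} for (3) and on the known behavior of $D^{k-1}$ on harmonic Maass forms for (4).
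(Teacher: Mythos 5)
Your proof is correct, and for parts (2)--(4) it is essentially the paper's argument: (2) is Lemma~\ref{poincarerelationships} applied termwise, (3) is Lemma~\ref{Ffourier}(2) combined with Lemma~\ref{Kloost}, and (4) is Bol's identity in the form of Theorem~1.1 of \cite{BOR}. Where you genuinely diverge is part (1). The paper argues in one line: by the Petersson coefficient formula and $a_f(pm)=0$, every $P(pm,k,N;\cdot)$ is orthogonal to $f$, ``which implies our assertion.'' As stated, orthogonality of $f$ to a subset of a spanning family does not formally place $f$ in the span of the complementary subset (a two-dimensional example with a non-orthogonal spanning pair already defeats that implication), so the paper is leaning on something unstated. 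Your route supplies exactly the missing ingredient: you identify $\Span\{P(m,k,N;\cdot):p\nmid m\}$ as the orthocomplement of the cusp forms supported on exponents divisible by $p$, write any such form as $(g\mid U_p)\mid V_p$, and use the adjointness of $U_p$ and $V_p$ together with $f\mid U_p=0$ to conclude $\langle f,g\rangle=0$. This costs you one extra standard fact (the $U_p$/$V_p$ adjunction at level divisible by $p$) but yields a complete argument. Your observation that the $\alpha_m$ may be taken real, so that the conjugate-linear operator $\xi_{2-k}$ reproduces $f$ rather than its coefficientwise conjugate in part (2), is likewise a detail the paper passes over silently and is worth keeping.
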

\begin{proof}
(1) 
Since $f(\tau)=\sum\limits_{n=1}^\infty a_f(n)q^n$ is a newform on $\Gamma_0(N)$ with $p^2\mid N$, we know from Theorem~4.6.17. of \cite{Miyake} that the coefficient $a_f(p)$ vanishes, thus, by multiplicativity, we have $a_f(pn)=0$ for all $n\in\N$. By the Petersson coefficient formula
\[\langle f,P(m,k,N;\bullet)\rangle=\frac{(k-2)!}{(4\pi m)^{k-1}}a_f(m)\]
we therefore see that every Poincar\'e series $P(pm,k,N;\bullet)$ lies in the orthogonal complement of $f$, which implies our assertion.

Claim $(2)$ follows immediately from Lemma \ref{poincarerelationships}, claim $(3)$ is clear from Lemmas \ref{Ffourier} and \ref{Kloost}, and $(4)$ is an immediate consequence of Bol's identity
\[D^{k-1}(f)=(-4\pi)^{1-k}R_{2-k}^{k-1}(f),\]
where $R_k=2i\tfrac{\partial}{\partial \tau}+\tfrac{k}{y}$ denotes the classical Maass raising operator and $$R_k^n:=R_{k+2(n-1)}\circ ...\circ R_{k+2}\circ R_k,$$ whose extension to harmonic Maass forms is Theorem 1.1 of \cite{BOR}.
\end{proof}

\section{The generating function of shifted convolution $L$-values}

In this section we recall the main ideas and results from \cite{MO14}. For this, let $f \in S_{k}(\Gamma_0(N))$ be a cusp form with even weight
$k\geq 2$. Recall the definitions of the shifted convolution Dirichlet series $D(f,f,h;s)$ from \eqref{shiftedseries} and of the symmetrized shifted convolution Dirichlet series $\Dhat(f,f,h;s)$ from \eqref{symmetrizedseries}. The following theorem is a special case of a theorem proved by the second two authors.
\begin{theorem}[\cite{MO14}, Theorem 1.1]\label{Ltheorem}
The generating function $\L(f,f;\tau)$ from \eqref{LDefinition} is the sum of a weight $2$ mixed mock modular form  and 
a weight 2 weakly holomorphic quasimodular form on $\Gamma_0(N)$. More precisely, if $M_{f}$ denotes a harmonic Maass form whose shadow is $f$, then there exists a weakly holomorphic quasimodular form $F\in\widetilde{M}_2^{!}(\Gamma_0(N))$ such that
\[\L(f,f;\tau)=-\frac{1}{(k-2)!}M_{f}^+(\tau)\cdot f(\tau)+F(\tau).\]
\end{theorem}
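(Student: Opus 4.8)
The plan is to realize $\L(f,f;\tau)$ as the \emph{holomorphic projection} of an explicit weight-$2$ non-holomorphic automorphic form built from $f$ and a harmonic Maass form with shadow $f$. First I would rewrite the special values in a form adapted to this. Reindexing the shift-$(-h)$ term in \eqref{symmetrizedseries} (and using $a_f(m)=0$ for $m\le 0$), one checks that
\[
\Dhat(f,f,h;k-1)=\sum_{n\geq 1}a_f(n+h)\overline{a_f(n)}\left(\frac{1}{n^{k-1}}-\frac{1}{(n+h)^{k-1}}\right).
\]
The essential point is that the symmetrization produces exactly the \emph{difference} of two reciprocal powers, and this precise difference is what a weight-$2$ holomorphic projection of a product involving an incomplete Gamma function will return.

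Next I would fix a harmonic Maass form $M_f\in H_{2-k}(\Gamma_0(N))$ with $\xi_{2-k}(M_f)=f$. By the $\xi$-operator proposition, its shadow relation forces the non-holomorphic part to be
\[
M_f^-(\tau)=-(4\pi)^{1-k}\sum_{n\geq 1}\overline{a_f(n)}\,n^{k-1}\,\Gamma(1-k;4\pi ny)\,q^{-n}.
\]
Since $M_f$ has weight $2-k$ and $f$ has weight $k$, the product $M_f\cdot f$ transforms like a weight-$2$ form on $\Gamma_0(N)$; as $f$ is cuspidal and $M_f$ has only polynomial growth together with meromorphic behavior at the cusps, $M_f\cdot f$ is a weight-$2$ real-analytic form of moderate growth, meromorphic at the cusps. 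I would then compute $\pihol(M_f\cdot f)$. Expanding the product, the contribution of $M_f^+\cdot f$ to the $h$-th coefficient is the ordinary Cauchy product, and since holomorphic projection fixes holomorphic coefficients it returns $[q^h]\!\left(M_f^+\cdot f\right)$ unchanged. The contribution of $M_f^-\cdot f$ to the coefficient of $e^{2\pi i h x}$ (for $h\ge 1$) is, up to an explicit power of $4\pi$ and $h$,
\[
\sum_{n\geq1}a_f(n+h)\overline{a_f(n)}\,n^{k-1}\int_0^\infty\Gamma(1-k;4\pi ny)\,e^{-4\pi hy}\,dy.
\]
The (suitably regularized) integral evaluates to a constant multiple of $\frac{\Gamma(k-1)}{4\pi h}\left(n^{1-k}-(n+h)^{1-k}\right)$, which is precisely the symmetrized difference above. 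Tracking the constants, this identifies $\pihol(M_f^-\cdot f)$ with $(k-2)!=\Gamma(k-1)$ times $\L(f,f;\tau)$, so that on positive-order coefficients
\[
[q^h]\,\pihol(M_f\cdot f)=[q^h]\!\left(M_f^+\cdot f\right)+(k-2)!\,[q^h]\,\L(f,f;\tau)\qquad(h\geq1).
\]

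To conclude, I would use that $\pihol$ of a weight-$2$ form of moderate growth meromorphic at the cusps lands in $\widetilde{M}_2^{!}(\Gamma_0(N))$: the failure of weight-$2$ holomorphic projection to be genuinely modular (rather than merely quasimodular) is exactly what forces the $E_2$-component, and it is absorbed into this space. Rearranging the coefficient identity, and observing that both sides are then determined by their positive-order coefficients up to a weight-$2$ weakly holomorphic quasimodular form, yields
\[
\L(f,f;\tau)=-\frac{1}{(k-2)!}\,M_f^+(\tau)\,f(\tau)+F(\tau),\qquad F\in\widetilde{M}_2^{!}(\Gamma_0(N)),
\]
which is the asserted decomposition into a weight-$2$ mixed mock modular form and a weight-$2$ weakly holomorphic quasimodular form.

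The hard part will be the holomorphic projection itself. In weight $2$ it is not absolutely convergent, and the naive coefficient integral diverges (equivalently, it meets the pole of $\Gamma(1-k)$ for integral $k\ge 2$). I would handle this by inserting a regularizing factor $y^{s}$, evaluating the resulting convergent integral as a meromorphic function of $s$, and analytically continuing to $s=0$; the anomalous term surviving this limit is precisely what produces the quasimodular $E_2$ contribution. Verifying that this regularized integral indeed returns the symmetrized difference $n^{1-k}-(n+h)^{1-k}$ with the normalization $\Gamma(k-1)$, and confirming that the leftover non-modular piece together with the constant terms assembles into a genuine element of $\widetilde{M}_2^{!}(\Gamma_0(N))$, is the technical heart of the argument and pins down the constant $-\tfrac{1}{(k-2)!}$.
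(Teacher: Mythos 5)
Your strategy---realizing $\L(f,f;\tau)$ as a regularized weight-$2$ holomorphic projection of $M_f\cdot f$---is exactly the route of the original proof in \cite{MO14} (the present paper quotes the theorem without reproving it), and much of your skeleton is right: the reindexed symmetrized value, the shape of $M_f^-$, and the $E_2$-anomaly absorbed into $\widetilde{M}_2^{!}(\Gamma_0(N))$. However, the step you defer as ``the technical heart'' hides a genuine obstruction that your plan, as written, does not survive. A standard Laplace-transform evaluation gives, for $\Re s>k-2$,
\[
\int_0^\infty y^{s}\,\Gamma(1-k;4\pi ny)\,e^{-4\pi hy}\,dy
=\frac{(4\pi n)^{1-k}\,\Gamma(s+2-k)}{(s+1)\,\bigl(4\pi(n+h)\bigr)^{s+2-k}}\;
{}_2F_1\!\left(1,\,s+2-k;\,s+2;\,\tfrac{h}{n+h}\right),
\]
and since $2-k$ is a nonpositive integer, $\Gamma(s+2-k)$ has a simple pole at $s=0$, while the hypergeometric factor tends to the finite nonzero value $\frac{1-\left(\frac{n}{n+h}\right)^{k-1}}{(k-1)\,\frac{h}{n+h}}$. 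So the per-term analytic continuation to $s=0$ does not exist: the residue is $n$-dependent (after including the $n^{k-1}$ weight from $M_f^-$ it is proportional to $(n+h)^{k-1}-n^{k-1}$), so the divergences neither vanish nor cancel term by term. Your claim that the regularized integral ``evaluates to a constant multiple of $\frac{\Gamma(k-1)}{4\pi h}\left(n^{1-k}-(n+h)^{1-k}\right)$'' is therefore false for individual terms. The correct procedure, and what \cite{MO14} actually does, is to keep $\Re s$ large, perform the sum over $n$ first so that the $h$-th coefficient becomes a finite combination of shifted convolution Dirichlet series in the auxiliary variable, and only then continue to $s=0$ using the meromorphic continuation of Hoffstein and Hulse \cite{HoffsteinHulse}; the poles of the individual series cancel precisely in the symmetrized combination \eqref{symmetrizedseries}. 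This is where the symmetrization is genuinely used---in your sketch it appears only as formal bookkeeping.

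The same continuation is needed even to make sense of your opening identity: by Deligne's bound together with Rankin--Selberg, the terms $a_f(n+h)\overline{a_f(n)}\bigl(n^{1-k}-(n+h)^{1-k}\bigr)$ are of size $n^{-1+o(1)}$ on average, so the series is not absolutely convergent at $s=k-1$ and the special values $\Dhat(f,f,h;k-1)$ are themselves defined by analytic continuation; a correct write-up must invoke this, and yours never does. Two smaller points. First, in this paper's normalization the shadow of $M_f^+$ is $-(4\pi)^{1-k}\xi_{2-k}(M_f)$, so ``shadow equal to $f$'' means $\xi_{2-k}(M_f)=-(4\pi)^{k-1}f$; with your choice $\xi_{2-k}(M_f)=f$ the constant you would obtain is $+\frac{(4\pi)^{k-1}}{(k-2)!}$ rather than $-\frac{1}{(k-2)!}$ (compare \eqref{LcalM}), so the constant-tracking you postpone would expose a normalization mismatch. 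Second, $M_f^+\cdot f$ generally has exponential growth at the cusps (principal parts), so it is not of moderate growth; one must extend the regularized projection so that it is defined on, and fixes, such weakly holomorphic pieces---routine, but a necessary part of the argument that ``$\pihol$ lands in $\widetilde{M}_2^{!}(\Gamma_0(N))$,'' which you assert rather than prove.
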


\medskip
\noindent {{\it Two remarks.}}

\noindent (1) In \cite{MO14} the second two authors considered a more general definition of the symmetrized Dirichlet series for pairs of cusp forms $f_1$ and $f_2$ 
with weights $k_1\geq k_2$. 
For every non-negative integer $\nu$ with $\nu\leq \tfrac{k_1-k_2}{2}$
they investigated analogous generating functions $\L^{(\nu)}(f_1,f_2;\tau)$ which involve higher weight modular forms arising from $f_1$ and $f_2$.
For these generating functions they
proved a similar result where one replaces the mixed mock modular forms by the $\nu$th order Rankin-Cohen bracket $[M_{f_1}^+,f_2]_\nu$. 

\smallskip
\noindent (2)  If we can choose the function $M_{f}$ so that $M_{f}^+\cdot f$ is holomorphic on the complex upper half-plane and bounded near representatives of all cusps of $\Gamma_0(N)$, 
then we can find the function $F$ in Theorem \ref{Ltheorem} in the finite-dimensional space of quasimodular forms $\widetilde{M}_2(\Gamma_0(N))=M_2(\Gamma_0(N))\oplus\C E_2$. We note that such an $M_f$ usually does not exist.
\medskip

For concrete examples it is often handy to work with the following immediate corollary of Theorem \ref{Ltheorem} and Lemma \ref{poincarerelationships}.

\begin{corollary}\label{cuspidalgoodness} Suppose that $k\geq 2$ is even and $m$ is a positive integer.
If
$P(\tau):=P(m,k,N;\tau)\in S_{k}(\Gamma_0(N))$ and $Q(\tau):=
Q(-m,k,N;\tau)\in H_{2-k}(\Gamma_0(N))$, then
$$\L(P,P;\tau)= \frac{1}{m^{k-1}\cdot (k-1)!}\cdot Q^{+}(\tau) P(\tau) +F(\tau),
$$
where $F\in \widetilde{M}^{!}_2(\Gamma_0(N))$. Moreover, if $m=1$, then
$F\in \widetilde{M}_2(\Gamma_0(N))$.
\end{corollary}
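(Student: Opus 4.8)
The plan is to apply Theorem~\ref{Ltheorem} directly with $f=P=P(m,k,N;\tau)$, using Lemma~\ref{poincarerelationships} to manufacture an explicit harmonic Maass form whose shadow is $P$. First I would record that, since $m$ and $k$ are real, the scalars that occur are fixed by complex conjugation, so the antilinear operator $\xi_{2-k}$ acts on them as if it were linear. Lemma~\ref{poincarerelationships} gives $\xi_{2-k}(Q)=(4\pi)^{k-1}m^{k-1}(k-1)P$ for $Q=Q(-m,k,N;\tau)$, so I would set
\[
M_P:=-\frac{1}{m^{k-1}(k-1)}\,Q(-m,k,N;\tau),
\]
which satisfies $\xi_{2-k}(M_P)=-(4\pi)^{k-1}P$. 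Hence the shadow $-(4\pi)^{1-k}\xi_{2-k}(M_P)$ of $M_P^+$ is exactly $P$, so $M_P$ is a harmonic Maass form of the kind demanded by Theorem~\ref{Ltheorem}.

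Next I would substitute $M_P$ into the formula of Theorem~\ref{Ltheorem}. Since $M_P^+=-\tfrac{1}{m^{k-1}(k-1)}Q^+$, the mixed mock modular term becomes
\[
-\frac{1}{(k-2)!}\,M_P^+\cdot P=\frac{1}{(k-2)!\,(k-1)\,m^{k-1}}\,Q^+\cdot P=\frac{1}{m^{k-1}(k-1)!}\,Q^+\cdot P,
\]
using $(k-1)(k-2)!=(k-1)!$. This produces the asserted identity with $F\in\widetilde{M}_2^!(\Gamma_0(N))$ the weakly holomorphic quasimodular form supplied by Theorem~\ref{Ltheorem}, completing the general case.

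For the refinement when $m=1$, I would invoke the second remark following Theorem~\ref{Ltheorem}: it is enough to verify that $M_P^+\cdot P$ is holomorphic on $\HH$ and bounded near every cusp, for then $F$ can be chosen in the finite-dimensional space $\widetilde{M}_2(\Gamma_0(N))$. Holomorphy on $\HH$ is immediate, as $Q^+$ and $P$ are each holomorphic there. For the cusp behavior I would read off the principal parts from Lemma~\ref{Ffourier}: the Maass--Poincar\'e series $Q(-1,k,N;\tau)$ is built from $\Gamma_\infty\backslash\Gamma_0(N)$, so its only pole sits at $\infty$, where $Q^+$ has a simple pole $\sim q^{-1}$, while $P(1,k,N;\tau)=q+\cdots$ vanishes to order at least one; thus the product is bounded at $\infty$. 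At every other cusp $Q^+$ is bounded and the cusp form $P$ vanishes, so $M_P^+\cdot P$ is bounded there too.

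The computations are routine, and I expect the only genuinely delicate points to be, first, the normalization bookkeeping---correctly tracking the antilinearity of $\xi_{2-k}$ together with the factors $(4\pi)^{k-1}$, $m^{k-1}$, and $(k-1)$ so that the shadow lands on $P$ with the right sign---and second, the cusp analysis in the case $m=1$, where one must confirm that the vanishing order of $P$ at $\infty$ exactly absorbs the simple pole of $Q^+$ and that no spurious growth is introduced at the remaining cusps. Everything else is a direct reading of Theorem~\ref{Ltheorem} and Lemma~\ref{poincarerelationships}.
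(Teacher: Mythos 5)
Your proposal is correct and follows exactly the route the paper intends: the corollary is stated there as an immediate consequence of Theorem~\ref{Ltheorem} and Lemma~\ref{poincarerelationships}, and your argument simply spells out that derivation, with the normalization of $M_P$, the factor $\frac{1}{m^{k-1}(k-1)!}$, and the $m=1$ cusp analysis all handled correctly.
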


\section{Proof of Theorem \ref{main}}

Let $f\in S_k(\Gamma_0(N))$ be a normalized newform of non-squarefree level $N$ whose coefficients lie in a number field $K$ and let $p$ be a prime with $p^2\mid N$. Further let $\calM_f(\tau)$ be a harmonic Maass form with holomorphic part 
\[\calM_f^+(\tau)=\sum_{n\gg -\infty} c^+(n)q^n\]
which is {\it good} for $f$ in the sense of \cite{BOR, GKO}. This means that $\calM_f$ has the following properties:
\begin{enumerate}
\item The principal part of $\calM_f^+$ at the cusp $\infty$ is in $K[q^{-1}]$.
\item The principal parts of $\calM_f^+$ at all other cusps is constant.
\item We have that $\xi_{2-k}(\calM_f)=\tfrac{f}{\Vert f\Vert^2}$, where $\Vert f\Vert$ denotes the Petersson norm of $f$.
\end{enumerate}
By Proposition 5.1 in \cite{BOR} we know that such an $\calM_f$ always exists. 

Now we know from Theorem \ref{Ltheorem} that
\begin{equation}\label{LcalM}
\L(f,f;\tau)=\frac{(4\pi)^{k-1}\Vert f\Vert^2}{(k-2)!}\calM_f^+(\tau)f(\tau)+\calQ_f(\tau),
\end{equation}
for a suitable weakly holomorphic quasimodular form $\calQ_f\in \widetilde{M}_2^!(\Gamma_0(N))$. Theorem 1.1 in \cite{GKO} yields that for every $\alpha\in\C$ with $\alpha-c^+(1)\in K$ the coefficients of the \emph{normalized mock modular form}
\[\calF_\alpha(\tau):=\calM_f^+(\tau)-\alpha\calE_f(\tau) \]
lie in the number field $K$ as well. Of course we can always choose $\alpha=c^+(1)$. In the case where $f$ has CM by a field of discriminant $d$, Theorem 1.3 in \cite{BOR} tells us that we can even choose $\alpha=0$ if we replace $K$ by $K(\zeta_{Nd})$, the $Nd$th cyclotomic field over $K$.

Since by Theorem 1.1 of \cite{BOR} $D^{k-1}\left(\calF_\alpha\right)\in M_k^!(\Gamma_0(N))$, we can write $\calF_\alpha$ as an Eichler integral
$\calF_\alpha=\calE_{g_\alpha}$
for some 
\[g_\alpha(\tau)=\sum\limits_{n\gg -\infty} a^{(\alpha)}(n)q^n\in M_k^!(\Gamma_0(N)).\]
Since $M_k^!(\Gamma_0(N))$ has a basis consisting of forms with integral Fourier coefficients, we know that the coefficients of $g_\alpha$ must have bounded denominators, in particular the $p$-adic valuation of $g_\alpha$, $v_p(g_\alpha):=\inf v_p\left(a^{(\alpha)}(n)\right)$, is bounded from below. By Theorem \ref{theonewform}, we know that $a^{(\alpha)}(n)=0$ whenever $p\mid n$ so that there can't be arbitrarily high negative powers of $p$ dividing the Fourier coefficients of $\calF_\alpha$ either, so that we can assume without loss of generality that the coefficients are $p$-integral, i.e, $v_p(\calF_\alpha)\geq 0$. 

For $n$ coprime to $p$ and $t\in\N$, we have the congruence 
\[n^{1-k}\equiv n^{(p-1)p^{t-1}+1-k}\pmod{p^t},\]
which implies that for every $r,t\in\N$ with $r\varphi(p^t)\geq k-1$, $\varphi$ denoting Euler's $\varphi$-function, we have that
\[\calF_\alpha\equiv D^{r(p-1)p^{t-1}-k+1}(g_\alpha)\pmod{p^t}.\]
Now $g_\alpha$ is a weakly holomorphic $p$-adic modular form of weight $k$ in the sense of Serre \cite{Serre} and by Th\'eor\`eme 5 (p. 211) of \cite{Serre}, we know that the operator $D$ maps $p$-adic modular forms of weight $k$ to $p$-adic modular forms of weight $k+2$. This follows immediately from the fact that the \emph{Serre derivative} $\theta(g):=D(g)-\tfrac{k}{12}E_2g$, maps modular forms of weight $k$ to modular forms of weight $k+2$ and that $E_2$ is a $p$-adic modular form of weight $2$. But this means that we have found a $p$-adically convergent sequence of $p$-adic modular forms which converges to $\calF_\alpha$, so $\calF_\alpha$ is a $p$-adic modular form as well. Since the sequence of weights $2-k+r(p-1)p^{t-1}$ $p$-adically converges to $2-k$, the weight of $\calF_\alpha$ as a $p$-adic modular form is indeed $2-k$, so that
\[\calF_\alpha f=\calM_f^+f-\alpha\calE_ff\]
becomes a $p$-adic modular form of weight $2$. If we now apply this in \eqref{LcalM}, our theorem follows.
\begin{remark}
Note that with the definitions from the above proof we can be more precise
 and make the constants explicit. For the non-CM case we can
 choose $\calL(f)=\calF_{c^+(1)}$ and therefore $\delta_1=\frac{(4\pi)^{k-1}\Vert f\Vert^2}{(k-2)!}\cdot p^\ell$ for a suitable exponent $\ell\in\N$ and $\delta_2=\delta_1\cdot c^+(1)$.
\end{remark}

\section{Example}\label{example}
 Consider the newform $f(\tau):=\eta(3\tau)^8\in S_4(\Gamma_0(9))$, where
 $\eta(\tau)$ is Dedekind's eta-function.
This form has complex multiplication by $\Q(\sqrt{-3})$, and is a multiple of the Poincar\'e
 series $P(1,4,9;\tau)$ since this space of cusp forms is one-dimensional. Theorem~\ref{main}
 applies for the prime $p=3$.
 Using a computer, one obtains the following
numerical approximations for the first few shifted convolution values.

\begin{displaymath}
\begin{array}{|c||c|c|c|c|c|}
\hline h & 3 & 6 & 9 & 12 & 15 \\
\hline \ & \ &   &   &   &   \\
\Dhat(f,f,h;3) & -10.7466\dots & 12.7931\dots & 6.4671\dots & -79.2777\dots & 64.2494\dots\\
\hline
\end{array}
\end{displaymath}

\medskip
\noindent
We note that $\Dhat(f,f,h;3)=0$ whenever $n$ is not a multiple of $3$.
We define real numbers $\beta, \gamma,$ and $\delta$ which are approximately
$$\beta:=\frac{(4\pi)^3}{2}\cdot \Vert P(1,4,9)\Vert^2=1.0468\dots,\ \ \  \gamma=-0.0796\dots,\ \ \ \delta=-0.8756\dots.
$$
As explained earlier, these real numbers arise naturally from the theory of Petersson inner products.
By Theorem 1.1 of \cite{MO14}, we have that
$$
\L(f,f;\tau)=\frac{f(\tau)Q^{+}(-1,4,9;\tau)}{\beta}+\gamma\left(1-24\sum_{n=1}^{\infty}
\sigma_1(3n)q^{3n}\right) + \delta\left(1+12\sum_{n=1}^{\infty}\sum_{\substack{d\mid 3n\\ 3\nmid d}}dq^{3n}\right).
$$
Since $f$ has complex multiplication, we have chosen $\delta_2=0$ in Theorem~\ref{main}. In particular, we have that $\delta_1:=1/\beta$,
$$
\calL_f(\tau):=Q^{+}(-1,4,9;\tau)=q^{-1}-\frac{1}{4}q^2+\frac{49}{125}q^5-\frac{3}{32}q^8-\dots,
$$
and 
$$\calQ_f(\tau)=\sum_{n=1}^{\infty} b_f(n)q^n:=\gamma\left(1-24\sum_{n=1}^{\infty}
\sigma_1(3n)q^{3n}\right) + \delta\left(1+12\sum_{n=1}^{\infty}\sum_{\substack{d\mid 3n\\ 3\nmid d}}dq^{3n}\right).
$$
We note that one can easily compute $\calL_f$ using the weight 4 weakly holomorphic modular form
$$
m(\tau):=\left(\frac{\eta(\tau)^3}{\eta(9\tau)^3}+3\right)^2\cdot \eta(3\tau)^8=q^{-1}+2q^2-49q^5+48q^8+\dots.
$$
It turns out that $\calL_f=-\calE_m$.

Theorem~\ref{main} proves that $\calL_f$ is a weight $-2$ weakly holomorphic $3$-adic modular form.
In fact, it turns out that $f\calL_f$ is a weight 2 cuspidal $3$-adic modular form. 
A straightforward argument proves that this product as a $q$-series is congruent to the constant 1 modulo 3.
This in turns implies that if
$h\neq 0$, then 
$$\widehat{D}(f,f,h;3)-b_f(h) \in \frac{3}{\beta}\cdot \Z_{(3)},
$$
where $\Z_{(3)}$ denotes the localization of $\Z$ at $3$, i.e., those rational numbers (in lowest terms) whose denominators are coprime to 3.
Moreover, we have the following higher congruences for every non-negative integer $n$
\begin{displaymath}
\begin{split}
 \widehat{D}(f,f,9n+6;3)-b_f(9n+6) &\in \frac{9}{\beta}\cdot \Z_{(3)},\\
 \widehat{D}(f,f,36n+30;3)-b_f(36n+30))&\in  \frac{27}{\beta}\cdot \Z_{(3)}.
  \end{split}
  \end{displaymath}
There are infinitely many such congruences modulo any power of 3. 

One striking consequence of the fact that $\calL_f$ is a cuspidal $3$-adic modular form is that
modulo any fixed power of 3, say $3^t$, ``almost all'' of the coefficients of this $p$-adic modular form are divisible by $3^t$.
Here we use ``almost all'' in the sense
of arithmetic density (see \cite{Serre74}). This is equivalent to the assertion that the rational numbers
$\beta(\widehat{D}(f,f,h;3)-b_f(h))$ are almost always multiples of any fixed power of 3.

To illustrate the phenomenon in general, we let
$$
\pi(3^t;X):=\frac{\# \{ 1 \leq h \leq X \ : \  \beta(\widehat{D}(f,f,h;3)-b_f(h))\equiv 0\pmod{3^t}\}}{X}.
$$
Here we illustrate these proportions for various small powers of 3. We stress that the convergence to 1 is extremely slow.

\medskip
\begin{displaymath}
\begin{array}{|c||c|c|c|c|c|}
\hline X & \pi(3;X)  & \pi(9;X) & \pi(27;X) & \pi(81;X) & \pi(243,;X) \\
%\hline 1000 & 1& 0.900\dots & 0.767\dots & 0.693\dots& 0.672\dots\\
%\hline 2000 & 1& 0.909\dots  & 0.777\dots& 0.701\dots & 0.674\dots\\
\hline 3000 & 1&\ \ 0.912\dots & 0.784\dots & 0.705\dots & 0.676\dots \\
%4000 & 1& 0.914\dots & 0.787\dots & 0.708\dots& 0.677\dots\\
%5000 & 1 & 0.917\dots & 0.790\dots & 0.711\dots & 0.678\dots\\
6000 & 1 &\ \ 0.917\dots & 0.792\dots& 0.711\dots & 0.679\dots\\
%7000 & 1 & 0.918\dots & 0.794\dots& 0.713\dots & 0.679\dots\\
%8000 & 1 & 0.919\dots & 0.796\dots& 0.714\dots & 0.680\dots\\
9000 & 1 &\ \ 0.920\dots & 0.798\dots& 0.716\dots & 0.680\dots\\
%10000 & 1 & 0.921\dots & 0.799\dots& 0.716\dots & 0.681\dots\\
%11000 & 1 & 0.921\dots & 0.799\dots& 0.717\dots & 0.681\dots\\
12000 & 1 &\ \ 0.922\dots & 0.800\dots& 0.718\dots & 0.681\dots\\
%13000 & 1 & 0.923\dots & 0.802\dots& 0.719\dots & 0.682\dots\\
%14000 & 1 & 0.923dots & 0.802\dots& 0.719\dots & 0.682\dots\\
15000 & 1 &\ \ 0.923\dots & 0.803\dots& 0.720\dots & 0.683\dots\\
\vdots  & \vdots & \vdots & \vdots  & \vdots & \vdots \\
\infty   &    1       &  1&  1    &    1  &  1\\  
\hline
\end{array}
\end{displaymath}

\end{document}